\def\cA{\mathcal A}
\def\cC{\mathcal C}
\def\cF{\mathcal F}
\def\cI{\mathcal I}
\def\cM{\mathcal M}
\def\cP{\mathcal P}
\def\a{\frak{a}}
\def\m{\frak{m}}
\def\p{\frak{p}}
\def\q{\frak{q}}
\def\N{\bN}
\newcommand{\bN}{\mathbb{N}}
\newcommand{\Z}{\mathbb{Z}}
\def\LLambda{\operatorname{{\textbf L}\Lambda}}
\def\im{\operatorname{im}}
\def\coker{\operatorname{coker}}
\def\ker{\operatorname{ker}}
\def\id{\operatorname{id}}
\def\pd{\operatorname{pd}}
\def\id{\operatorname{id}}
\def\invlim{\varprojlim}
\def\del{\partial}
\def\Spec{\operatorname{Spec}}
\def\Hom{\operatorname{Hom}}
\def\Ext{\operatorname{Ext}}
\def\Tor{\operatorname{Tor}}
\def\K{\operatorname{K}}
\def\D{\operatorname{D}}
\def\H{\operatorname{H}}
\newcommand{\Flat}{\mathcal{F}}
\newcommand{\Cot}{\mathcal{C}}
\newcommand{\PurInj}{\mathcal{PI}}
\newcommand{\Mod}{\operatorname{Mod}}
\numberwithin{equation}{section}
\newcounter{intro}
\theoremstyle{plain} 
\newtheorem{thm}[equation]{Theorem}
\newtheorem{cor}[equation]{Corollary}
\newtheorem{lem}[equation]{Lemma}
\newtheorem{prop}[equation]{Proposition}
\theoremstyle{definition}
\newtheorem{example}[equation]{Example}
\theoremstyle{remark}
\newtheorem{rem}[equation]{Remark}
\title{Minimal complexes of cotorsion flat modules}
\author{Peder Thompson}
\address{Texas Tech University, Lubbock, TX 79409}
\address{{\em Current address:} NTNU, 7491 Trondheim, Norway}
\email{peder.thompson@ntnu.no}
\date{\today}                                           
\subjclass[2010]{13D02, 13C13, 13C05}
\thanks{{\em Key words and phrases:}
minimal complex, cotorsion flat resolution, semi-flat replacement, flat cover, cotorsion envelope}
\begin{document}
\maketitle
\begin{abstract}
Let $R$ be a commutative noetherian ring.  We give criteria for a complex of cotorsion flat $R$-modules to be minimal, in the sense that every self homotopy equivalence is an isomorphism. To do this, we exploit Enochs' description of the structure of cotorsion flat $R$-modules. More generally, we show that any complex built from covers in every degree (or envelopes in every degree) is minimal, as well as give a partial converse to this in the context of cotorsion pairs. As an application, we show that every $R$-module is isomorphic in the derived category over $R$ to a minimal semi-flat complex of cotorsion flat $R$-modules. 
\end{abstract}


\section*{Introduction}
One of the most ubiquitous examples of a minimal chain complex is that of a minimal free resolution, introduced by Hilbert in the 1890s. Minimal projective and injective resolutions are an integral part of homological algebra, and there are useful criteria for identifying whether a such a complex is minimal. In particular, a projective resolution $P$ of a finitely generated module over a local ring $(R,\m)$ is minimal if and only if $P\otimes_R R/\m$ has zero differential; an injective resolution $I$ is minimal if and only if $\Hom_R(R/\p,I)_\p$ has zero differential for every prime $\p$.   In \cite{AM02}, Avramov and Martsinkovsky introduced a versatile notion of minimality for chain complexes, which recovers both of these classical notions: A chain complex $C$ is {\em minimal} if every homotopy equivalence $\gamma:C\to C$ is an isomorphism. One of our goals is to give criteria, in the spirit of these classical conditions, for a chain complex of cotorsion flat modules (defined below) to be minimal. 

Let $R$ be a commutative noetherian ring. We say an $R$-module $M$ is {\em cotorsion flat} if it is both flat and satisfies the added assumption that $\Ext_R^1(F,M)=0$ for every flat $R$-module $F$, i.e., $M$ is also cotorsion.  Enochs showed \cite{Eno84} that cotorsion flat $R$-modules have a unique decomposition, indexed by the primes of $R$, similar to the decomposition for injective modules given by Matlis \cite{Mat58}.  We use this description to characterize minimal complexes of cotorsion flat $R$-modules. In a subsequent paper \cite{Tho17b}, we show that minimal complexes of cotorsion flat $R$-modules are useful in computing cosupport, an invariant homologically dual to support that was introduced by Benson, Iyengar, and Krause in \cite{BIK12}. 

Parallel to the minimality criteria for complexes of projective or injective $R$-modules, one of our goals is to show (Theorem \ref{minimal_CF_complexes}) that a complex $B$ of cotorsion flat $R$-modules is minimal if and only if either of the following criteria hold for every $\p\in \Spec R$:
\begin{itemize}
\item The complex $R/\p\otimes_R \Hom_R(R_\p,\Lambda^\p B)$ has zero differential;
\item There is no subcomplex of the form $0 \to \widehat{R_\p}^\p\xrightarrow{\cong} \widehat{R_\p}^\p\to 0$ that is degreewise a direct summand of $B$.
\end{itemize}
The first condition has been studied previously in the context of flat resolutions of cotorsion modules, where the numbers $\dim_{\kappa(\p)}\Tor_i^{R_\p}(\kappa(\p),\Hom_R(R_\p,M))$ were examined by Enochs and Xu in \cite{Xu95,EX97} (here, $\kappa(\p)=R_\p/\p R_\p$). More recently, Dailey showed in \cite[Theorem 4.2.8]{Dai16} that a flat resolution $F$ of a cotorsion module is built from flat covers if and only if the complex $\kappa(\p)\otimes_R \Hom_R(R_\p,F)$ has zero differential. The notion of minimality studied by Enochs, Xu, and Dailey in this context refers to a resolution being built by flat covers, as opposed to the ``homotopic'' notion of minimality defined in \cite{AM02} and as is considered in Theorem \ref{minimal_CF_complexes}.

One of the ingredients in the proof of Theorem \ref{minimal_CF_complexes} is understanding how cotorsion flat $R$-modules can be broken down.  Roughly, we show that $\p$-adic completion $\Lambda^\p(-)$ and colocalization $\Hom_R(R_\p,-)$ allow us to focus on the ``$\p$-component'' of a cotorsion flat $R$-module; see Lemma \ref{CF_structure}.  After proving Theorem \ref{minimal_CF_complexes}, we end Section \ref{section_minimality} by remarking that completion and colocalization both preserve minimal complexes of cotorsion flat $R$-modules; see Proposition \ref{coloc_and_comp_preserve_minimal}.

We also study the relationship between minimality and covers/envelopes (whose definitions are recalled in \ref{covers_envelopes_resolutions}) more generally in Section \ref{covers_envelopes_minimality}. There we show that constructing complexes from covers (or envelopes) leads to a stronger form of minimality than the homotopic version introduced by Avramov and Martsinkovsky. We prove (Theorem \ref{cov_env_min}) that a complex of $R$-modules which is built entirely from covers or from envelopes (in a fixed class of modules) must be minimal.

Although not every minimal complex is built in this way (see Example \ref{example_min}), we do give a partial converse in the context of cotorsion pairs, see Proposition \ref{converse_min}. An immediate consequence is that projective resolutions of modules are minimal if and only if they are built from projective covers in every degree; injective resolutions of modules are minimal if and only if they are built from injective envelopes in every degree. We end Section \ref{covers_envelopes_minimality} with an application to characterizing perfect rings by the existence of minimal projective resolutions for all modules.

In Section \ref{section_minimal_CF}, we prove (Theorem \ref{CFreplacement}) the existence of minimal left and right cotorsion flat resolutions for certain modules, as well as show that every $R$-module is isomorphic in the derived category over $R$ to a minimal semi-flat complex of cotorsion flat $R$-modules.

\section{Preliminaries}\label{preliminaries2}
Throughout this paper, the ring $R$ is assumed to be commutative and noetherian.  We briefly recall some background material and set notation needed throughout. 

\subsection{Complexes}
A {\em complex} of $R$-modules (or shorter, {\em $R$-complex}) is a sequence of $R$-modules and $R$-linear maps
$$C=\cdots \xrightarrow{\del_C^{i-1}} C^i \xrightarrow{\del_C^i} C^{i+1}\xrightarrow{\del_C^{i+1}}\cdots$$
such that $\del_C^{i+1}\del_C^i=0$ for all $i\in \Z$.  For $R$-complexes $C$ and $D$, a {\em degree zero chain map} $f:C\to D$ is a collection of $R$-linear maps $\{f^i:C^i\to D^{i}\}_{i\in \Z}$, satisfying 
$f^{i+1}\del_C^i=\del_D^i f^i$. These are the morphisms in the category of $R$-complexes.  
We say that an $R$-complex $C$ is {\em bounded on the left (respectively, right)} if $C^i=0$ for $i\ll0$ (respectively, $C^i=0$ for $i\gg0$).  
As is standard, we set $C_i=C^{-i}$. For $R$-complexes $C$ and $D$, the total tensor product complex $C\otimes_R D$ is the direct sum totalization of the evident double complex, and the total Hom complex $\Hom_R(C,D)$ is the direct product totalization of the underlying double complex (see \cite{Wei94} 2.7.1 and 2.7.4, respectively).  We say an $R$-complex $C$ is {\em exact} (or {\em acyclic}) if $\H^i(C)=0$ for all $i\in \Z$.

\subsection{Homotopy and derived categories}
We say that degree zero chain maps $f,g:C\to D$ are {\em chain homotopic}, denoted by $f\sim g$, if there exists a cohomological degree $-1$ map (called a chain homotopy) $h:C\to D$ such that $f-g=\del_Dh+h\del_C$.  An $R$-complex $C$ is {\em contractible} if $1_C\sim 0_C$.  For further details on homotopies and complexes in general, see for example \cite{Avr98}. 

The {\em homotopy category} $\K(R)$ is the category whose objects are complexes of $R$-modules and morphisms are degree zero chain maps up to chain homotopy.  If we further invert all quasi-isomorphisms between $R$-complexes (degree zero chain maps that induce an isomorphism on cohomology), we obtain the {\em derived category} of $R$, denoted $\D(R)$.  We use $\simeq$ to denote isomorphisms in $\D(R)$. For more details on the derived category, see for example \cite[Chapter 10]{Wei94}.

We say that an $R$-complex $F$ is {\em semi-flat} (also called DG-flat, as in \cite{AF91}) if $F^i$ is flat for all $i\in\Z$ and the functor $F\otimes_R-$ preserves quasi-isomorphisms.  For example, any bounded on the right complex of flat $R$-modules is semi-flat \cite[Example 1.1.F]{AF91}.

\subsection{Injective modules}
Over a commutative noetherian ring $R$, we have a decomposition of injective $R$-modules, due to Matlis \cite{Mat58}. In fact, there exists a bijection between prime ideals $\p $ of $\Spec R$ and indecomposable injective modules $E(R/\p)$, the injective hull of $R/\p$ over $R$.  In this way, for some sets $X_\p$, every injective $R$-module can be uniquely (up to isomorphism) expressed as a sum $\bigoplus_{\p \in \Spec R}E(R/\p )^{(X_\p)}$, where for a module $N$ and set $X$, we set $N^{(X)}=\bigoplus_{X}N$.  The indecomposable injective $R$-module $E(R/\p)$ is $\p$-torsion and $\p$-local \cite[page 354]{Sha69}; a module $M$ is {\em $\p$-torsion} if for every $x\in M$, there exists $n\geq 1$ such that $\p^nx=0$ and $M$ is {\em $\p$-local} if for every $y\in R\setminus \p$, multiplication by $y$ on $M$ is an automorphism.

\subsection{Completions}\label{limits_and_completions}
For an ideal $\a\subseteq R$ and an $R$-module $M$, the {\em $\a$-adic completion of $M$} is
$$\Lambda^\a M=\invlim_n (M/\a^n M),$$
or equivalently, $\Lambda^\a M=\invlim_n(R/\a^n\otimes_R M)$. We also occasionally write $\widehat{M}^\a$ to denote the completion.  As $\Lambda^\a(-)$ defines an additive functor on the category of $R$-modules, it naturally extends to a functor on the homotopy category $\Lambda^\a:K(R)\to K(R)$.  We say an $R$-complex $M$ is {\em $\a$-complete} if the natural map $M\to \Lambda^\a M$ is an isomorphism. For a nice discussion of $\Lambda^\a$ and its left derived functor $\LLambda^\a$, see \cite{PSY14}. 

\subsection{Covers, envelopes, and $\cF$-resolutions}\label{covers_envelopes_resolutions}
Let $\cF$ be a class of $R$-modules closed under isomorphisms. For an $R$-module $M$, a morphism $\phi:M\to F$ with $F\in \cF$ is an {\em $\cF$-envelope} of $M$ if:
\begin{enumerate}
\item For any map $\phi':M\to F'$ with $F'\in \cF$, there exists $f:F\to F'$ such that $f\circ \phi=\phi'$, and
\item If $f:F\to F$ is an endomorphism with $f\circ \phi=\phi$, then $f$ must be an isomorphism.
\end{enumerate}
If $\phi:M\to F$ satisfies (1) but not necessarily (2), it is called an {\em $\cF$-preenvelope}. If an $\cF$-envelope exists, it is unique up to isomorphism. A class $\cF$ is {\em enveloping} (respectively, {\em preenveloping}) if every $R$-module has an $\cF$-envelope (respectively, an $\cF$-preenvelope).  If an enveloping class contains all injective $R$-modules, the envelopes will necessarily be injections.  

{\em $\cF$-(pre)covers} and {\em (pre)covering} classes are defined dually; see \cite[Chapter 5]{EJ00} for details. In particular, if the class $\cF$ contains the ring $R$, then $\cF$-covers are surjective.  

For any ring, Xu showed that the class of cotorsion modules is enveloping if and only if the class of flat modules is covering \cite[Theorem 3.4.6]{Xu96}; shortly after, Bican, El Bashir, and Enochs showed that the class of flat modules is covering \cite{BEBE01} for any ring (as was shown for a commutative noetherian ring by Xu \cite{Xu96}).  Hence the class of cotorsion modules is enveloping. More classically, Fuchs showed that in a noetherian ring, the class of pure-injective modules is enveloping \cite{Fuc67}. 

If $\cF$ is an enveloping class, an {\em enveloping $\cF$-resolution} of $M$ is an $R$-complex 
$$0\to F^0\to F^1\to \cdots$$
with each $F^i\in \cF$, constructed so that $M\to F^0$, $\coker(M\to F^0)\to F^1$, and $\coker(F^{i-1}\to F^{i})\to F^{i+1}$ for $i\geq 1$ are $\cF$-envelopes.  Dually, if $\cF$ is a covering class, a {\em covering $\cF$-resolution} of $M$ is an $R$-complex
$$\cdots \to F_1\to F_0\to 0$$
with each $F_i\in \cF$, constructed so that $F_0\to M$, $F_1\to \ker(F_0\to M)$, and $F_{i+1}\to \ker(F_i\to F_{i-1})$ for $i\geq 1$ are $\cF$-covers. 
Observe that the augmented enveloping $\cF$-resolution $0\to M \to F^0\to F^1\to  \cdots$ and the augmented covering $\cF$-resolution $ \cdots \to F_1\to F_0\to M\to 0$ need not be exact.

\begin{rem}
Our terminology of enveloping/covering $\cF$-resolutions is intentionally non-standard to avoid collision with usage of the term ``minimal.'' What we call enveloping/covering $\cF$-resolutions are referred to as {\em minimal} left/right $\cF$-resolutions in \cite[Chapter 8]{EJ00} as well as elsewhere in the literature, but we prefer for now to reserve the term ``minimal'' to mean a minimal complex. We show later, in Theorem \ref{cov_env_min}, that enveloping/covering $\cF$-resolutions of modules are in fact minimal complexes, which justifies the existing terminology. 
\end{rem}

We continue to use the un-decorated term {\em resolution} to mean an honest resolution in the sense that the augmented sequence is exact. That is, an $R$-complex $C$ is a {\em left resolution} of an $R$-module $M$ if there exists a quasi-isomorphism $C\xrightarrow{\simeq} M$ and $C^i=0$ for $i>0$; it is a {\em right resolution} if there is a quasi-isomorphism $M\xrightarrow{\simeq} C$ and $C^i=0$ for $i<0$. 
A {\em projective resolution} of a module is a left resolution $P$ such that each $P_i$ is projective; an {\em injective resolution} of a module is a right resolution $I$ such that each $I^i$ is injective.

\section{Decomposing cotorsion flat modules}\label{cotorsion_flat_module_preliminaries}

An $R$-module $C$ is called {\em cotorsion} if $\Ext_R^1(F,C)=0$ for every flat $R$-module $F$.  All injective $R$-modules, as well as all $R$-modules of the form $\Hom_R(M,E)$ for any $R$-module $M$ and injective $R$-module $E$, are cotorsion \cite[Lemma 2.1]{Eno84}. The class of flat $R$-modules and the class of cotorsion $R$-modules form what is called a {\em cotorsion pair}; in particular, if $F$ is any $R$-module such that $\Ext_R^1(F,C)=0$ for every cotorsion $R$-module $C$, then $F$ is flat \cite[Lemma 7.1.4]{EJ00}.  An $R$-module that is both cotorsion and flat will be called a {\em cotorsion flat} $R$-module.  

Enochs showed \cite[Theorem]{Eno84} that cotorsion flat $R$-modules have a unique decomposition indexed by $\Spec R$: An $R$-module $B$ is cotorsion flat if and only if $B\cong\prod_{\p\in \Spec R} \widehat{R_\p^{(X_\p)}}^\p$, for some sets $X_\p$. Moreover, this decomposition is uniquely determined (up to isomorphism) by the ranks of the free $R_\p$-modules $R_\p^{(X_\p)}$. For any set $X$ and $\p\in \Spec R$, there is an isomorphism \cite[Lemma 4.1.5]{Xu96}:
\begin{align}\label{CF_Hom}
\Hom_R(E(R/\p),E(R/\p)^{(X)})\cong \widehat{R_\p^{(X)}}^\p.
\end{align}

The following lemma is one of our key tools in understanding the structure of complexes of cotorsion flat $R$-modules. The lemma captures the idea of recovering the $\p$-component of a cotorsion flat $R$-module. Enochs employs this idea in \cite[Proof of Theorem]{Eno84} but does not use completion and colocalization as is done below; the ``in particular'' of part (2) below can be found in \cite[Proof of Lemma 8.5.25]{EJ00}, for instance.

\begin{lem}\label{CF_structure}
Let $B\cong \prod_{\q\in \Spec R}T_\q$ be a cotorsion flat $R$-module, with $T_\q=\widehat{R_\q^{(X_\q)}}^\q$ for some sets $X_\q$.  For an ideal $\a\subseteq R$, a prime ideal $\p\in \Spec R$, and a multiplicatively closed set $S$,
\begin{enumerate}
\item[(1)] $\widehat{B}^\a\cong \displaystyle\prod_{\a\subseteq \q}T_\q$, and
\item[(2)] $\Hom_R(S^{-1}R,B)\cong \displaystyle \prod_{\q\cap S=\varnothing}T_\q$; in particular, $\Hom_R(R_\p,B)\cong \displaystyle \prod_{\q\subseteq \p}T_\q$.
\end{enumerate}
Moreover, if $B$ is a complex of cotorsion flat $R$-modules, the natural maps
$$\Hom_R(S^{-1}R,B)\hookrightarrow B\quad\text{ and }\quad B\twoheadrightarrow \widehat{B}^\a$$
are degreewise split morphisms.  In particular, the complex 
$\Hom_R(R_\p,\widehat{B}^\p)$ 
can be identified with the subquotient complex 
$$\cdots \to T_\p^i\to T_\p^{i+1}\to \cdots,$$
having differential $\del_\p=\Hom_R(R_\p,\widehat{\del}^\p)$ induced from $B$.
\end{lem}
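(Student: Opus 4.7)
The plan is to reduce both isomorphisms to per-factor computations on the product decomposition $B=\prod_\q T_\q$, and to deduce the ``moreover'' assertions formally from those reductions. As a first step, I would verify that both functors distribute over the product: $\Hom_R(S^{-1}R,-)$ commutes with products in its second argument by the universal property, and $\widehat{(-)}^\a$ commutes with products because $\a$ is finitely generated (as $R$ is noetherian), giving $\a^n\prod_\q T_\q=\prod_\q\a^n T_\q$ and hence $B/\a^n B\cong\prod_\q(T_\q/\a^n T_\q)$; inverse limits then preserve products.

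Next I would compute each factor separately. For $\widehat{T_\q}^\a$: when $\a\not\subseteq\q$, any $s\in\a\setminus\q$ is a unit in $R_\q$, hence acts invertibly on the $R_\q$-module $T_\q$, so $\a T_\q=T_\q$ and $\widehat{T_\q}^\a=0$; when $\a\subseteq\q$, I would use the identification \eqref{CF_Hom} realizing $T_\q\cong\Hom_R(E(R/\q),E(R/\q)^{(X_\q)})$ together with the fact that a $\Hom$ out of the $\a$-torsion module $E(R/\q)$ is already $\a$-adically complete, giving $\widehat{T_\q}^\a=T_\q$. For $\Hom_R(S^{-1}R,T_\q)$: when $\q\cap S\neq\varnothing$, pick $s\in\q\cap S$ and observe that any $R$-linear $\phi$ satisfies $\phi(x)=s^n\phi(x/s^n)\in\q^n T_\q$ for every $n$, so $\q$-adic separatedness of $T_\q$ forces $\phi=0$; when $\q\cap S=\varnothing$, every element of $S$ is a unit in $R_\q$ and therefore acts invertibly on $T_\q$, extending the $R$-action to an $S^{-1}R$-action, so evaluation at $1$ gives $\Hom_R(S^{-1}R,T_\q)\cong T_\q$ with inverse $t\mapsto(r/s\mapsto(r/s)t)$.

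For the ``moreover'' assertions, the natural maps $\Hom_R(S^{-1}R,B)\hookrightarrow B$ and $B\twoheadrightarrow\widehat{B}^\a$ become, under the above identifications, the inclusion $\prod_{\q\cap S=\varnothing}T_\q\hookrightarrow\prod_\q T_\q$ and the projection $\prod_\q T_\q\twoheadrightarrow\prod_{\a\subseteq\q}T_\q$, each canonically split by the inclusion or projection onto the complementary subproduct of factors. For a complex $B$ of cotorsion flat modules this argument applies in each cohomological degree, giving the claimed degreewise splittings (no chain-map compatibility is required). The final identification of $\Hom_R(R_\p,\widehat{B}^\p)$ follows by composing (1) and (2): only the factor indexed by the unique $\q$ satisfying both $\p\subseteq\q$ and $\q\subseteq\p$ survives, namely $\q=\p$, so at each degree the complex reduces to $T_\p^i$ with differential induced from $B$.

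The main obstacle will be the case $\a\subseteq\q$ in the per-factor completion computation: showing that $\widehat{T_\q}^\a=T_\q$. Although intuitively ``$T_\q$ is already complete enough,'' the module $T_\q$ need not be finitely generated over $\widehat{R_\q}$, so this does not follow from the standard finite-module theory over complete local rings. The route through the $\Hom$ description in \eqref{CF_Hom} together with the fact that an inverse limit of $\a^n$-torsion modules with surjective transition maps inherits $\a$-adic completeness appears cleanest.
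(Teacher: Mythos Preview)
Your overall strategy---reduce to per-factor computations on the product $\prod_\q T_\q$---matches the paper's, but the per-factor arguments differ. For part~(2), the paper passes through the identification~\eqref{CF_Hom} and Hom--tensor adjunction to obtain $\Hom_R(S^{-1}R,T_\q)\cong\Hom_R(E(R/\q)\otimes_R S^{-1}R,\,E(R/\q)^{(X_\q)})$, and then uses that $E(R/\q)$ is $\q$-local and $\q$-torsion to see that $E(R/\q)\otimes_R S^{-1}R$ is $E(R/\q)$ or $0$ according to whether $\q\cap S$ is empty. Your direct computation on $T_\q$ (invertibility of $S$ when $\q\cap S=\varnothing$; $\q$-adic separatedness of $T_\q$ forcing $\phi=0$ when $\q\cap S\neq\varnothing$) is more elementary and avoids the detour through injective hulls. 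For the $\a\subseteq\q$ case of part~(1), the paper dispatches the point you flag in one line by citing \cite[Exercise~8.2]{Mat89}: a $\q$-complete module is automatically $\a$-complete when $\a\subseteq\q$, so $T_\q=\widehat{T_\q}^\a$. Your proposed route through the description $T_\q\cong\Hom_R(E(R/\q),E(R/\q)^{(X_\q)})$ together with the $\a$-torsion of $E(R/\q)$ also works, but is longer; in effect you would be re-deriving that instance of the Matsumura exercise. Your instinct that this step needs justification beyond the standard finite-module theory is sound, and either argument supplies it.
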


\begin{proof}
For (1), first decompose $B$ as $\prod_{\q\in \Spec R}T_\q\cong \left(\prod_{\a\subseteq \q}T_\q\right) \oplus \left(\prod_{\a\not\subseteq \q}T_\q\right)$.  For $\a\not\subseteq\q$ and $n\in \N$, $R/\a^n\otimes_R T_\q=0$ since $T_\q$ is $\q$-local; thus 
$$\invlim_n\left(R/\a^n\otimes_R \prod_{\a\not\subseteq \q}T_\q\right)\cong \invlim_n\prod_{\a\not\subseteq\q}(R/\a^n\otimes_R T_\q)=0.$$
On the other hand, for $\a\subseteq \q$, the $\q$-complete $R$-module $T_\q$ is $\a$-complete (see \cite[Exercise 8.2]{Mat89}). As products commute with inverse limits, a product of $\a$-complete modules is again $\a$-complete; (1) follows.

For (2), setting $E:=E(R/\q)$, we have:
\begin{align*}
\Hom_R(S^{-1}R,\prod_{\q\in \Spec R} T_\q)&\cong \prod_{\q\in \Spec R} \Hom_R(S^{-1}R,\Hom_R(E,E^{(X_\q)}))\text{, by (\ref{CF_Hom}),}\\
&\cong \prod_{\q\in \Spec R} \Hom_R(E\otimes_R S^{-1}R,E^{(X_\q)})\text{, by adjointness,}\\
&\cong \prod_{\q\cap S=\varnothing} \Hom_R(E,E^{(X_\q)})\text{, as $E$ is $\q$-local, $\q$-torsion,}\\
&\cong \prod_{\q\cap S=\varnothing} T_\q\text{, again applying (\ref{CF_Hom}).}
\end{align*}
The last remarks follow from the existence of natural maps $R\to S^{-1}R$ and $R\to \widehat{R}^\a$ in conjunction with (1) and (2). 
\end{proof}

\section{Minimality criteria for complexes of cotorsion flat modules}\label{section_minimality}
One of our main results is Theorem \ref{minimal_CF_complexes} below, where we present minimality criteria for complexes of cotorsion flat $R$-modules.  As above, we use the notation 
$$T_\q=\widehat{R_\q^{(X_\q)}}^\q$$
for some prime $\q$ and index set $X_\q$.  We start with two lemmas:

\begin{lem}\label{iso_for_each_p}
For any homomorphism $f:\prod_{\q\in \Spec R} T_\q\to \prod_{\q\in \Spec R} T_\q'$, define $f_\p$ to be the composite
\[\xymatrix{
f_{\p}:&T_{\p}\ar@{^(->}[r]^{} & \prod_{\q} T_{\q}\ar[r]^{f} &\prod_{\q} T_{\q}'\ar@{->>}[r]^{}& T_{\p}',
}\]
where the outer maps are the canonical ones.  If $f_\p:T_\p \to T_\p'$ is an isomorphism for all $\p\in \Spec R$, then $f$ is an isomorphism.
\end{lem}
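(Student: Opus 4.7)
The plan is to show that $f$ is both injective and surjective, exploiting a ``lower triangular'' structure that the prime decomposition forces on $f$. First I would establish the vanishing of individual matrix entries: the composite $\pi_\q \circ f \circ \iota_\p : T_\p \to T_\q'$ is zero whenever $\q \not\subseteq \p$. Indeed, picking $s \in \q \setminus \p$, the element $s$ acts as a unit on $T_\p$ (which is $\p$-local) but lies in $\q$, so it is topologically nilpotent on the $\q$-adically separated module $T_\q'$; the image therefore lies in $\bigcap_n s^n T_\q' \subseteq \bigcap_n \q^n T_\q' = 0$.

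The main technical step, which I expect to be the crux, is to upgrade this vanishing from individual summands to the whole product. Set $B_{(\p)} := \prod_{\q \not\supseteq \p} T_\q$, so that $B = \prod_{\q \supseteq \p} T_\q \oplus B_{(\p)}$, and similarly for $B'$. I claim that $f(B_{(\p)}) \subseteq B'_{(\p)}$. The argument invokes Lemma \ref{CF_structure}(1) twice: first, $\widehat{B_{(\p)}}^\p = 0$, since the set of primes both containing and not containing $\p$ is empty, so by naturality of $\widehat{(-)}^\p$ the image $f(B_{(\p)})$ lies in $\ker(B' \twoheadrightarrow \widehat{B'}^\p)$; second, the same lemma identifies this kernel with $B'_{(\p)}$. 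In particular $\pi_\p \circ f$ vanishes on all of $B_{(\p)}$, which is the ``continuous'' strengthening of the pointwise vanishing that tames the infinite-sum subtleties inherent in maps out of a product.

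Given this subclaim, injectivity is immediate. If $x = (x_\q) \in \ker f$ were nonzero, the ACC on primes in the noetherian ring $R$ produces a prime $\p$ maximal in $S := \{\q : x_\q \neq 0\}$; decomposing $x$ via $B = \prod_{\q \supseteq \p} T_\q \oplus B_{(\p)}$, maximality forces the first summand to equal $\iota_\p(x_\p)$, while the subclaim kills $\pi_\p f$ on the second summand. Thus $0 = \pi_\p f(x) = f_\p(x_\p)$, contradicting that $f_\p$ is an isomorphism and $x_\p \neq 0$.

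For surjectivity, given $y \in B'$ I would build $x = (x_\p)$ by well-founded induction on $\Spec R$ ordered by reverse inclusion (well-founded since ascending chains of primes in $R$ terminate). At a maximal ideal $\m$, set $x_\m := f_\m^{-1}(y_\m)$; at a general $\p$, with $x_\q$ already defined for every $\q \supsetneq \p$, set $x_\p := f_\p^{-1}\bigl(y_\p - \pi_\p f(z^{(\p)})\bigr)$, where $z^{(\p)} \in B$ has entry $x_\q$ in slot $\q$ for $\q \supsetneq \p$ and zero elsewhere. A computation using the subclaim to annihilate the contribution from the $B_{(\p)}$-part of $x$ verifies $\pi_\p f(x) = y_\p$ for every $\p$, hence $f(x) = y$.
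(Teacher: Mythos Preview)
Your proof is correct and takes a genuinely different route from the paper's. Both arguments rest on the same ``lower triangular'' phenomenon---that the $\p$-projection of $f$ vanishes on the factors indexed by primes not containing $\p$---but they establish and exploit it differently. The paper imposes a total well-ordering on $\Spec R$ (maximal primes first, built from the strata $Z_\alpha$), cites Xu's vanishing $\Hom_R(\prod_{\beta<\alpha<\lambda}T_{\q_\alpha},T_{\q_\beta})=0$ for the triangularity, and then proves by transfinite induction that each truncated map $f_{\leq\beta}$ is an isomorphism, handling successor ordinals by a snake/five-lemma step and limit ordinals by an inverse-limit argument. You instead stay with the natural partial order: the triangularity $f(B_{(\p)})\subseteq B'_{(\p)}$ comes directly from Lemma~\ref{CF_structure}(1) via naturality of $\Lambda^\p$, injectivity falls out from choosing a maximal prime in the support of a kernel element, and surjectivity is a well-founded recursion on reverse inclusion. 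Your approach is more self-contained (it avoids the external reference and the well-ordering machinery) and arguably cleaner for injectivity; the paper's approach has the advantage of treating injectivity and surjectivity simultaneously and making the inverse-limit structure of the product explicit. Note, incidentally, that your first paragraph on individual matrix entries is purely motivational---your completion argument in the second paragraph already gives the full block-triangular statement without appealing to it.
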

\begin{proof}
We define a well ordering on the set $Z=\Spec R$, for any noetherian ring $R$, as is done in \cite{Eno87}.  This will allow us to avoid assuming finite Krull dimension.  Let $Z_0$ be the set of maximal ideals of $R$.  For any ordinal $\alpha>0$, define $Z_\alpha$ to be the set of maximal elements of $Z\setminus \left( \bigcup_{\beta<\alpha}Z_\beta\right)$.  If $Z\setminus \left( \bigcup_{\beta<\alpha}Z_\beta\right)\not=\varnothing$, then $Z_\alpha\not=\varnothing$, because $R$ is noetherian.  Moreover, there exists an ordinal $\kappa$ such that $Z_\alpha=\varnothing$ for $\alpha\geq \kappa$ (else we would contradict the fact that $Z$ is a set), hence $Z$ is the disjoint union $Z=\bigcup_{\alpha<\kappa} Z_\alpha$.  We may well order each $Z_\alpha$ (e.g., \cite[Proposition 1.1.7]{EJ00}).  By \cite[Exercise 9a, page 7]{EJ00}, we may use the well orderings of each $Z_\alpha$ to well order $Z$ so that if $\p\in Z_{\alpha}$ and $\q\in Z_{\beta}$ with $\alpha<\beta$, then $\p<\q$.  We may therefore index the primes in $Z=\Spec R$ by $\alpha<\lambda$ for some ordinal $\lambda$ so that if $\beta<\alpha<\lambda$, then $\q_\beta\not\subset \q_\alpha$.

With this well ordering, the map above is $f: \prod_{\alpha<\lambda} T_{\q_\alpha} \to \prod_{\alpha<\lambda} T_{\q_\alpha}'$, with the assumption that $f_{q_\alpha}$ is an isomorphism for each $\alpha<\lambda$. For a fixed $\beta<\lambda$, we have \cite[Lemma 4.1.8]{Xu96}:
\begin{align}\label{zero_maps_Tq}
\Hom_R\left(\prod_{\beta<\alpha<\lambda}T_{\q_\alpha},T_{\q_\beta}\right)=0.
\end{align}
For each $\beta<\lambda$, we may write
\begin{align}\label{direct_sum_decomposition}
\prod_{\alpha<\lambda}T_{\q_\alpha}=\left(\prod_{\alpha\leq \beta}T_{\q_\alpha}\right) \oplus \left(\prod_{\beta<\alpha<\lambda}T_{\q_\alpha}\right)\text{, and similarly for $\prod_{\alpha<\lambda}T_{\q_\alpha}'$,}
\end{align}
and so by (\ref{zero_maps_Tq}), there exists a map $f_{\leq \beta}$ making the following diagram commute, where the vertical maps are the canonical split projections:
\[\xymatrix{
\prod_{\alpha<\lambda}T_{\q_\alpha} \ar[r]^f\ar@{->>}[d] & \prod_{\alpha<\lambda} T_{\q_\alpha}'\ar@{->>}[d]\\
\prod_{\alpha\leq \beta} T_{\q_\alpha}\ar@{-->}[r]^{f_{\leq \beta}} & \prod_{\alpha\leq \beta} T_{\q_\alpha}'
}\]
Moreover, if $\beta'<\beta$ and $\pi_{\beta'\beta}:\prod_{\alpha\leq \beta}T_{\q_\alpha}\twoheadrightarrow \prod_{\alpha\leq \beta'}T_{\q_\alpha}$ (and similarly, $\pi_{\beta'\beta}':\prod_{\alpha\leq \beta}T_{\q_\alpha}'\twoheadrightarrow \prod_{\alpha\leq \beta'}T_{\q_\alpha}'$) are the canonical projections, a diagram chase shows that $\pi_{\beta'\beta}'\circ f_{\leq \beta}=f_{\leq \beta'}\pi_{\beta'\beta}$. Since $\prod_{\alpha<\lambda}T_{\q_{\alpha}}=\invlim_{\beta}\prod_{\alpha\leq \beta<\lambda}T_{\q_{\alpha}}$ (and likewise for $\prod_{\alpha<\lambda}T_{\q_\alpha}'$) \cite[Proof of Theorem 4.1]{Eno87}, and $\{f_{\leq \beta}\}_{\beta<\lambda}$ is a morphism of inverse systems (alternatively, these inverse systems satisfy the Mittag-Leffler condition \cite[Proposition 3.5.7]{Wei94}), to show $f$ is an isomorphism, it is enough to show
$$f_{\leq \beta}:\prod_{\alpha\leq \beta<\lambda}T_{\q_{\alpha}}\to \prod_{\alpha\leq \beta<\lambda}T_{\q_{\alpha}}'$$
is an isomorphism for all $\beta<\lambda$.  To do this, we apply transfinite induction (see e.g., \cite[Proposition 1.1.18]{EJ00}).

When $\beta=0$, the definition of $f_{\leq 0}$ along with the decomposition in (\ref{direct_sum_decomposition}) shows $f_{\leq 0}=f_{\q_0}$, which is an isomorphism by hypothesis.  

For an ordinal $\beta<\lambda$ such that $\beta=\epsilon+1$ for an ordinal $\epsilon$, there exists a map making the following diagram commute by (\ref{zero_maps_Tq}):  
\[\xymatrix{
T_{\q_\beta} \ar@{^(->}[r] \ar@{-->}[d]^{\exists} & \prod_{\alpha\leq \beta}T_{\q_{\alpha}} \ar@{->>}[r] \ar[d]^{f_{\leq \beta}} &\prod_{\alpha\leq \epsilon}T_{\q_\alpha}\ar[d]^{f_{\leq \epsilon}} \\
T_{q_\beta}' \ar@{^(->}[r] & \prod_{\alpha\leq \beta} T_{\q_\alpha}' \ar@{->>}[r] & \prod_{\alpha\leq \epsilon}T_{\q_\alpha}'
}\]
Appealing to the decomposition in (\ref{direct_sum_decomposition}), we see that the vertical map on the left agrees with $f_{\q_\beta}$. 
The left and right vertical maps are isomorphisms, by hypothesis and assumption, respectively. Hence $f_{\leq \beta}$ is an isomorphism in this case.  

Finally, suppose $\beta$ is a limit ordinal and that $f_{\leq \beta'}$ is an isomorphism for all $\beta'<\beta$.  Using the fact that $f_{\leq \beta}=\invlim_{\beta'<\beta} f_{\leq\beta'}$ and that $\{f_{\leq\beta'}\}_{\beta'<\beta}$ is a morphism of inverse systems where each $f_{\leq \beta'}$ is an isomorphism, we obtain that $f_{\leq \beta}$ is an isomorphism in this case as well.  It follows that $f$ is an isomorphism.
\end{proof} 

The following observation seems to be known, but for lack of a reference we spell it out here. Compare this result also with \cite[Lemma 3.5]{PSY14}.

\begin{lem}\label{acyclic_mod_p}
Let $F$ be a semi-flat $R$-complex.  If $R/\p\otimes_R F$ is acyclic, then $\Lambda^\p(F)$ is acyclic. 
\end{lem}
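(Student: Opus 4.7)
The plan is to reduce to showing that each $F \otimes_R R/\p^n$ is acyclic, and then to conclude that $\Lambda^\p F = \invlim_n (F \otimes_R R/\p^n)$ is acyclic via a Mittag--Leffler/Milnor sequence argument.

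The key intermediate claim I would establish is: for any finitely generated $R/\p$-module $M$, the complex $F \otimes_R M$ is acyclic. Since $R/\p$ is noetherian, $M$ admits a resolution $Q \to M$ by finitely generated free $R/\p$-modules. Regarded as an $R$-linear quasi-isomorphism, semi-flatness of $F$ ensures $F \otimes_R Q \to F \otimes_R M$ is again a quasi-isomorphism. Each term $F \otimes_R Q^i$ is a finite direct sum of copies of $F \otimes_R R/\p$, and hence acyclic by hypothesis; since $Q$ is bounded above, a standard filtration (or spectral sequence) argument shows the totalization $F \otimes_R Q$ is acyclic, and therefore so is $F \otimes_R M$.

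With this in hand, I would induct on $n$ to show $F \otimes_R R/\p^n$ is acyclic. The base case $n=1$ is the hypothesis. For $n>1$, apply $F \otimes_R -$ to the short exact sequence $0 \to \p^{n-1}/\p^n \to R/\p^n \to R/\p^{n-1} \to 0$; flatness of $F$ in each degree preserves exactness, and $\p^{n-1}/\p^n$ is finitely generated over $R/\p$ by noetherianness of $R$. The left term of the resulting sequence of complexes is acyclic by the intermediate claim and the right term by induction, so the long exact sequence of cohomology forces $F \otimes_R R/\p^n$ to be acyclic.

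Finally, for the inverse limit step, the surjections $R/\p^{n+1} \twoheadrightarrow R/\p^n$ remain surjective after tensoring with each $F^i$, so the inverse system $\{F \otimes_R R/\p^n\}_n$ is degreewise Mittag--Leffler. The Milnor exact sequence
\[
0 \to \invlim{}^{\,1}_{n}\, \H^{i-1}(F \otimes_R R/\p^n) \to \H^i(\Lambda^\p F) \to \invlim_{n}\, \H^i(F \otimes_R R/\p^n) \to 0
\]
then has vanishing outer terms by the previous step, yielding acyclicity of $\Lambda^\p F$. The main subtlety I anticipate is the totalization in the intermediate claim --- verifying that column-acyclicity of the bounded-above double complex passes to the totalization --- but this is standard given the one-sided boundedness of $Q$ (using either the convergent column-filtration spectral sequence, or writing $Q$ as a filtered colimit of its brutal truncations and using exactness of filtered colimits).
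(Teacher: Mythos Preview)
Your proof is correct and follows essentially the same route as the paper's: both reduce to showing $R/\p^n\otimes_R F$ is acyclic for all $n$ by induction using the graded pieces $\p^{n-1}/\p^n$, and then conclude via Mittag--Leffler. The only cosmetic difference is that the paper handles the key step---acyclicity of $\p^{n-1}/\p^n\otimes_R F$---by observing that $R/\p\otimes_R F$ is semi-flat and acyclic over $R/\p$ and citing Spaltenstein, whereas you unwind this directly by resolving $M$ over $R/\p$ and invoking semi-flatness of $F$ over $R$; your colimit-of-brutal-truncations justification for the totalization is the right way to handle the unboundedness of $F$.
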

\begin{proof}
The canonical short exact sequence $\p\hookrightarrow R\twoheadrightarrow R/\p$ induces a short exact sequence of $R$-complexes $\p \otimes_R F\hookrightarrow R\otimes_R F\twoheadrightarrow R/\p\otimes_R F$; hence $\p\otimes_R F\to F$ is a quasi-isomorphism, by the long exact sequence in homology \cite[Theorem 1.3.1]{Wei94}. 
Moreover, for each $n\geq 1$, there is a short exact sequence $\p^{n+1} \otimes_R F\hookrightarrow \p^n \otimes_R F\twoheadrightarrow \p^n/ \p^{n+1}\otimes_R F$ (induced by the canonical maps in $\p^{n+1}\hookrightarrow \p^n\twoheadrightarrow \p^n/\p^{n+1}$). Since $R/\p\otimes_R F$ is acyclic and semi-flat over $R/\p$, \cite[Proposition 5.7]{Spa88} yields that $\p^n / \p^{n+1}\otimes_{R} F\cong \p^n/\p^{n+1}\otimes_{R/\p} R/\p \otimes_R F$ is acyclic. Thus $\p^n \otimes_R F\hookrightarrow F$ is a quasi-isomorphism for all $n\geq 1$, hence $R/\p^n\otimes_R F$ is acyclic for all $n\geq 1$.  The inverse system $\cdots \twoheadrightarrow R/\p^2\otimes_R F\twoheadrightarrow R/\p\otimes_R F$ satisfies the Mittag-Leffler condition, and it follows by \cite[Theorem 3.5.8]{Wei94} that $\invlim_n (R/\p^n\otimes_R F)=\Lambda^\p(F)$ is also acyclic.
\end{proof}

To find an appropriate minimality criterion for complexes of cotorsion flat $R$-modules, we turn to minimal complexes of injective $R$-modules for inspiration. Recall that an $R$-complex $B$ is {\em minimal} if each homotopy equivalence $\gamma:B\to B$ is an isomorphism \cite{AM02}; equivalently, if each map $\gamma:B\to B$ homotopic to $1_B$ is an isomorphism \cite[Proposition 1.7]{AM02}.  This agrees with the classical notion of a minimal free resolution $F$ of a finitely generated module in a local ring $(R,\m)$ (i.e., one that satisfies $\del_F(F)\subseteq \m F$), or that of a minimal injective resolution $I$ (where the inclusions $\ker(\del_I^i)\hookrightarrow I^i$ are essential).  Furthermore, a complex $I$ of injective $R$-modules is minimal if and only if $\Hom_R(R/\p,I)\otimes_R R_\p$ has zero differential for every $\p\in \Spec R$ (see \cite[Lemma B.1]{Kra05} and \cite[Remark 3.15]{ILL07}). Compare this with condition (2) of the following result.  

\begin{thm}\label{minimal_CF_complexes}
Let $R$ be a commutative noetherian ring and $B$ be a complex of cotorsion flat $R$-modules.  The following conditions are equivalent:
\begin{enumerate}
\item The complex $B$ is minimal;
\item For every $\p\in \Spec R$, the complex $R/\p \otimes_R \Hom_R(R_\p,\Lambda^\p B)$ has zero differential;
\item There does not exist a subcomplex of $B$ of the form 
$$\cdots \to 0 \to \widehat{R_\p}^\p\xrightarrow{\cong} \widehat{R_\p}^\p\to 0 \to \cdots$$ 
that is degreewise a direct summand of $B$, for any $\p\in \Spec R$.
\end{enumerate}
\end{thm}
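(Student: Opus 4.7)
I plan to prove the cycle (1) $\Rightarrow$ (3) $\Rightarrow$ (2) $\Rightarrow$ (1). Throughout write $B^i \cong \prod_\q T_\q^i$ with $T_\q^i = \widehat{R_\q^{(X_\q^i)}}^\q$. As a preliminary, since $R/\p \otimes_R \widehat{R_\p^{(X)}}^\p \cong \kappa(\p)^{(X)}$, the identifications of Lemma \ref{CF_structure} turn condition (2) into the assertion that the induced differential $\del_\p : T_\p^i \to T_\p^{i+1}$ sends $T_\p^i$ into $\p T_\p^{i+1}$ for every $\p$ and $i$. I dispose of the easy implication (1) $\Rightarrow$ (3) by contraposition: given a degreewise-summand subcomplex $D$ of the prescribed shape in degrees $i,i+1$ with module retractions $\rho^i,\rho^{i+1}$, I construct a chain retraction $r:B\to D$ by setting $r^{i+1}:=\rho^{i+1}$ and $r^i:=\del_D^{-1}\rho^{i+1}\del_B$ (and $r^j:=0$ otherwise). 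This splits $B$ as $D\oplus B'$ in complexes, so $B\twoheadrightarrow B'\hookrightarrow B$ is homotopic to $1_B$ (since $D$ is contractible) but is not an isomorphism, contradicting minimality.

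For (2) $\Rightarrow$ (1) I apply the Avramov--Martsinkovsky criterion: it suffices to show that any $\gamma:B\to B$ with $1_B-\gamma=\del_B h+h\del_B$ is an isomorphism. Apply the functor $\Hom_R(R_\p,\Lambda^\p(-))$, which by Lemma \ref{CF_structure} extracts from $B$ the subquotient complex $T_\p^\bullet$ with differential $\del_\p$; the homotopy identity becomes $1-\gamma_\p=\del_\p h_\p+h_\p\del_\p$. By the preliminary, both $\del_\p h_\p$ and $h_\p\del_\p$ have image inside $\p T_\p^\bullet$, so $\eta:=1_{T_\p^i}-\gamma_\p^i$ satisfies $\eta^n(T_\p^i)\subseteq \p^n T_\p^i$ by iterated $R$-linearity. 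Because $T_\p^i$ is $\p$-adically complete, $\sum_{n\geq 0}\eta^n$ converges to a two-sided inverse of $\gamma_\p^i$, so $\gamma_\p^i$ is an isomorphism for every $\p$. Applying Lemma \ref{iso_for_each_p} in each cohomological degree then yields that $\gamma^i$ is an isomorphism for all $i$.

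The main obstacle is (3) $\Rightarrow$ (2), where I must manufacture a genuine degreewise-summand subcomplex from a nonzero reduced differential. Arguing contrapositively, suppose $\bar{\del}_\p^i:\kappa(\p)^{(X_\p^i)}\to\kappa(\p)^{(X_\p^{i+1})}$ is nonzero; choose indices $x\in X_\p^i$, $y_0\in X_\p^{i+1}$ with a unit entry. The difficulty is that $T_\p^i$ is neither a direct sum nor a direct product of copies of $\widehat{R_\p}^\p$, so the naive coordinate inclusions and projections need not be split. To resolve this I exploit (\ref{CF_Hom}) to write $T_\p^j=\Hom_R(E,E^{(X_\p^j)})$ with $E=E(R/\p)$: the split inclusion $E\hookrightarrow E^{(X_\p^i)}$ at coordinate $x$ induces a split injection $\alpha:\widehat{R_\p}^\p\hookrightarrow T_\p^i\subseteq B^i$, while the split projection $E^{(X_\p^{i+1})}\twoheadrightarrow E$ at coordinate $y_0$ induces a split surjection $\beta_{y_0}:T_\p^{i+1}\twoheadrightarrow\widehat{R_\p}^\p$. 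Combining with the product projection $\pi_\p:B^{i+1}\twoheadrightarrow T_\p^{i+1}$, the composite $u:=\beta_{y_0}\pi_\p\del_B\alpha$ reduces modulo $\p$ to the chosen unit entry of $\bar{\del}_\p^i$, so a Nakayama-type argument over the local ring $\widehat{R_\p}^\p$ (level-by-level on the quotients $\widehat{R_\p}^\p/\p^n\widehat{R_\p}^\p$) forces $u$ to be an automorphism. Then $\del_B\alpha$ is a split injection into $B^{i+1}$, and the subcomplex cut out by $\alpha(\widehat{R_\p}^\p)$ and $\del_B\alpha(\widehat{R_\p}^\p)$ is a degreewise summand of $B$ with isomorphism differential $u$, contradicting (3).
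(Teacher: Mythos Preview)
Your proof is correct and follows the same cycle $(1)\Rightarrow(3)\Rightarrow(2)\Rightarrow(1)$ as the paper, but two of the implications use genuinely different tools. For $(2)\Rightarrow(1)$, the paper packages $\gamma_\p^i$ into the two-term semi-flat complex $0\to T_\p^i\xrightarrow{\gamma_\p^i}T_\p^i\to 0$ and applies Lemma~\ref{acyclic_mod_p} (acyclicity modulo $\p$ forces acyclicity after completion); your Neumann-series argument, using only that $1-\gamma_\p^i$ has image in $\p T_\p^i$ and that $T_\p^i$ is $\p$-adically complete, is more elementary and sidesteps semi-flatness altogether. For $(3)\Rightarrow(2)$, both arguments manufacture a forbidden two-term summand from a nonzero reduced differential, but the paper chooses an arbitrary $u\in T_\p^i\setminus\p T_\p^i$ and appeals to flat-cover theory \cite[Proposition~4.1.6 and Lemma~5.2.4]{Xu96} to see that $1\mapsto u$ and its image under $\del$ are split monomorphisms, whereas you obtain the splittings for free from coordinate inclusions and projections on $E^{(X)}$ via the identification~(\ref{CF_Hom}), reducing everything to checking that a single endomorphism of the complete local ring $\widehat{R_\p}^\p$ is a unit. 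Your route avoids the cover machinery at the cost of tracking the isomorphism~(\ref{CF_Hom}) carefully; the paper's route works for any lift $u$, not just coordinate elements. One minor wording issue: in your last sentence the differential of the constructed subcomplex is $\del_B$ restricted to $\alpha(\widehat{R_\p}^\p)$, not $u$ itself; the fact that $u$ is an automorphism is what guarantees $\del_B\alpha$ is split injective, hence that this restricted differential is an isomorphism onto a summand.
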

\begin{proof}
$(1) \Rightarrow (3)$: This implication follows from \cite[Lemma 1.7]{AM02}.

$(3) \Rightarrow (2)$: For each $\p\in \Spec R$, the complex $\Hom_R(R_\p,\Lambda^\p B)$ can be described as $\cdots \to T_\p^i\xrightarrow{\del_\p^i} T_\p^{i+1}\to \cdots$, with differential induced from $B$; in detail, Lemma \ref{CF_structure} gives a commutative diagram for each $i\in \Z$:
\[\xymatrix{
\prod_\q T_\q^i \ar[rr]^{\del^i} \ar@{->>}[d]&& \prod_\q T_\q^{i+1} \ar@{->>}[d]\\
\prod_{\p\subseteq \q} T_\q^i \ar[rr]^{\Lambda^\p\del^i} && \prod_{\p\subseteq \q} T_\q^{i+1}\\
T_\p^i\ar[rr]^{\del_\p^i:=\Hom(R_\p,\Lambda^\p\del^i)}\ar@{^(->}[u] && T_\p^{i+1}\ar@{^(->}[u] 
}\]
where all vertical maps are degreewise split by Lemma \ref{CF_structure}. The canonical (degreewise split) inclusion and surjection $\iota_\p^i:T_\p^i\hookrightarrow \prod_\q T_\q^i$ and $\pi_\p^{i+1}:\prod_\q T_\q^{i+1}\twoheadrightarrow T_\p^{i+1}$, respectively, agree with the splittings of the vertical maps in the diagram.  Since $\Hom_R(T_\p^i,T_{\q}^{i+1})=0$ for $\p\subsetneq \q$ by \cite[Lemma 4.1.8]{Xu96}, a diagram chase shows $\pi_\p^{i+1}\del^i\iota_\p^i=\del_\p^i$.

Now, aiming for a contradiction, suppose $\overline{\del_\p^i}=R/\p\otimes_R \del_\p^i\not=0$ for some $\p\in \Spec R$ and $i\in \Z$.  Thus there exists $u\in T_\p^i\setminus (\p R_\p) T_\p^i$ such that $\pi_\p^{i+1} \del^i\iota_\p^i(u)=v\in T_\p^{i+1}\setminus (\p R_\p) T_\p^{i+1}$.  As $T_\p^i$ is an $\widehat{R_\p}^\p$-module, there is an $\widehat{R_\p}^\p$-linear map $\alpha:\widehat{R_\p}^\p\to T_\p^i$ mapping $1\mapsto u$.  As $u\not\in (\p R_\p)T_\p^i$, there exists a map $\rho:T_\p^i\twoheadrightarrow \kappa(\p)$ such that $\rho(u)\not=0$. As $\rho \alpha$ is nonzero, it follows that $\rho\alpha$ is a flat cover by \cite[Proposition 4.1.6]{Xu96}, hence \cite[Lemma 5.2.4]{Xu96} yields that $\alpha$ is a split injection. Similarly, $\del^i\iota_\p^i\alpha$ is a split injection, using instead that there exists a nonzero map $\rho':\prod_{\q}T_\q^{i+1}\twoheadrightarrow \kappa(\p)$ such that $\rho'\del^i\iota_\p^i\alpha$ is nonzero, and then applying \cite[Lemma 5.2.4]{Xu96}.

Set $A=\cdots \to 0 \to \widehat{R_\p}^\p\xrightarrow{=} \widehat{R_\p}^\p\to 0\to \cdots$, concentrated in degrees $i$ and $i+1$, and define a map from $\phi:A\to B$ by setting $\phi^i=\iota_\p^i\alpha$, $\phi^{i+1}=\del^i\iota_\p^i\alpha$, and $\phi^j=0$ in all other degrees.  Observe that $\phi$ is a degreewise split injective chain map: $\phi^i$ is a split injection because both $\alpha$ and $\iota_\p^i$ are split injections; $\phi^{i+1}$ is a split injection by construction.  This produces a subcomplex of $B$ forbidden by (3), hence $R/\p\otimes_R \del_\p^i=0$ and (2) follows. 

$(2) \Rightarrow (1)$:  Let $\gamma:B\to B$ be a morphism that is homotopic to the identity $1_B$. Recall from \cite[Lemma 1.7]{AM02} that $B$ is minimal if and only if $\gamma$ is an isomorphism. 
Set $\gamma_\p=\Hom_R(R_\p,\Lambda^\p(\gamma))$ and $\id_\p=\Hom_R(R_\p,\Lambda^\p (1_B))$.  Since the functor defined as $R/\p\otimes_R \Hom_R(R_\p,\Lambda^\p(-))$ preserves homotopy equivalences, the morphism $R/\p\otimes_R \gamma_\p$ is homotopic to $R/\p\otimes_R \id_\p$.  By hypothesis, the differential of $R/\p\otimes_R\Hom_R(R_\p,\Lambda^\p B)$ is trivial, and it follows that $R/\p\otimes_R \gamma_\p=R/\p\otimes_R\id_\p$ for each prime $\p$. 
Fix $i$ and $\p$ and set $F=\cdots \to 0 \to  T_\p^i\xrightarrow{\gamma_\p^i} T_\p^i\to 0\to \cdots$.  Evidently, $F$ is a semi-flat $R$-complex such that $R/\p\otimes_R F$ is acyclic (since $R/\p\otimes_R \gamma_\p=R/\p\otimes_R\id_\p$).  By Lemma \ref{acyclic_mod_p}, $F$ ($\cong\Lambda^\p F$) is acyclic. 
Hence $\gamma_\p^i$ is an isomorphism for each $\p\in \Spec R$.  By Lemma \ref{iso_for_each_p}, it follows that $\gamma^i:B^i\to B^i$ is an isomorphism for every $i\in \Z$, and so (1) follows.
\end{proof}

We end this section by showing that, for a multiplicatively closed set $S$ and an ideal $\a$, the functors $\Hom_R(S^{-1}R,-)$ and $\Lambda^{\a}(-)$ preserve minimal complexes of cotorsion flat $R$-modules.

\begin{prop}\label{coloc_and_comp_preserve_minimal}
Let $B$ be a minimal complex of cotorsion flat $R$-modules, $\a\subseteq R$ an ideal, and $S$ a multiplicatively closed set. Then $\Lambda^\a(B)$ and $\Hom_R(S^{-1}R,B)$ are minimal complexes of cotorsion flat $R$-modules.
\end{prop}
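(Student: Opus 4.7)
The plan is to apply criterion (2) of Theorem \ref{minimal_CF_complexes}: a complex $C$ of cotorsion flat modules is minimal exactly when $R/\p\otimes_R\Hom_R(R_\p,\Lambda^\p C)$ has zero differential for every $\p\in\Spec R$. This reduces the problem to two tasks: verifying that $\Lambda^\a B$ and $\Hom_R(S^{-1}R,B)$ remain cotorsion flat in each degree, and then verifying the vanishing condition above for every prime.

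For the first task, write $B^i\cong\prod_{\q\in\Spec R}T_\q^i$ with $T_\q^i=\widehat{R_\q^{(X_{\q,i})}}^\q$. Lemma \ref{CF_structure} identifies
$$\Lambda^\a(B^i)\cong\prod_{\a\subseteq\q}T_\q^i\quad\text{and}\quad \Hom_R(S^{-1}R,B^i)\cong\prod_{\q\cap S=\varnothing}T_\q^i.$$
Both are again products of modules of the form $\widehat{R_\q^{(X)}}^\q$, hence cotorsion flat by Enochs' decomposition theorem.

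For the second task, fix $\p\in\Spec R$ and iterate Lemma \ref{CF_structure}. The $i$-th term of $\Hom_R(R_\p,\Lambda^\p(\Lambda^\a B))$ keeps precisely those $T_\q^i$ with $\q\subseteq\p$, $\p\subseteq\q$, and $\a\subseteq\q$; this forces $\q=\p$ and additionally requires $\a\subseteq\p$. Therefore $\Hom_R(R_\p,\Lambda^\p(\Lambda^\a B))$ equals $\Hom_R(R_\p,\Lambda^\p B)$ when $\a\subseteq\p$, and is the zero complex otherwise. Similarly, for $\Hom_R(S^{-1}R,B)$ the surviving component corresponds to $\q=\p$ with $\p\cap S=\varnothing$, and the complex $\Hom_R(R_\p,\Lambda^\p(\Hom_R(S^{-1}R,B)))$ either equals $\Hom_R(R_\p,\Lambda^\p B)$ or is zero. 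In every case, criterion (2) applied to the minimal complex $B$ shows the differential vanishes modulo $\p$, which gives the desired conclusion.

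The main subtlety to verify carefully is that the differential on the $T_\p$ component, computed inside $\Lambda^\a B$ or inside $\Hom_R(S^{-1}R,B)$, genuinely matches the one computed inside $B$. This comes down to the naturality statement in the ``in particular'' clause of Lemma \ref{CF_structure}: the canonical split projections $B\twoheadrightarrow\Lambda^\a B$ and the canonical split inclusions $\Hom_R(S^{-1}R,B)\hookrightarrow B$ respect the differentials, because $\Lambda^\a(-)$ and $\Hom_R(S^{-1}R,-)$ are functors on chain complexes. Once this identification is in hand, the argument is a direct bookkeeping of which primes contribute in the iterated decomposition.
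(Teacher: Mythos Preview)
Your proposal is correct and follows essentially the same approach as the paper: both arguments use Lemma \ref{CF_structure} to identify the degrees as cotorsion flat and then verify criterion (2) of Theorem \ref{minimal_CF_complexes}. The only cosmetic difference is that the paper applies the functor $R/\p\otimes_R\Hom_R(R_\p,\Lambda^\p(-))$ directly to the degreewise split chain maps $\Hom_R(S^{-1}R,B)\hookrightarrow B\twoheadrightarrow\Lambda^\a B$ and concludes immediately (a degreewise split sub- or quotient complex of a complex with zero differential has zero differential), whereas you compute explicitly that the surviving $\p$-component is either the same as for $B$ or zero; these amount to the same verification.
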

\begin{proof}
By Lemma \ref{CF_structure}, both $\Lambda^\a(B)$ and $\Hom_R(S^{-1}R,B)$ are complexes of cotorsion flat modules and the maps 
$$B\twoheadrightarrow \Lambda^\a(B)\quad \text{ and } \quad \Hom_R(S^{-1}R,B)\hookrightarrow B$$ are a degreewise split surjection and injection, respectively. Applying the functor 
$$R/\p\otimes_R\Hom_R(R_\p,\Lambda^\p(-))$$ 
to either of these shows the resulting complexes must have zero differential by Theorem \ref{minimal_CF_complexes}, and hence the desired complexes are minimal.
\end{proof}

\section{Covers, envelopes, and minimal complexes}\label{covers_envelopes_minimality}
Herein we consider minimality of $R$-complexes more generally. For any class of $R$-modules $\cA$ that is closed under isomorphisms, we will show that a complex built from $\cA$-covers in every degree (or $\cA$-envelopes in every degree) is minimal, as well as prove a partial converse.  The results proved here are applied in the following section, where we study the case of cotorsion flat resolutions and replacements.

\begin{thm}\label{cov_env_min}
Let $\cA$ be a class of $R$-modules closed under isomorphisms and let $A$ be an $R$-complex with each $A_i\in \cA$. Suppose at least one of the following holds:
\begin{enumerate}
\item The canonical surjection $A_i\twoheadrightarrow \coker(\del_{i+1})$ is an $\cA$-cover for all $i\in \Z$; or
\item The canonical injection $\ker(\del_i)\hookrightarrow A_i$ is an $\cA$-envelope for all $i\in \Z$.
\end{enumerate}
Then $A$ is a minimal $R$-complex.
\end{thm}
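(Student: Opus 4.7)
My plan is to prove that $A$ has no nonzero contractible direct summand, which by the standard characterization of minimality (a version of \cite[Proposition 1.7]{AM02}) gives that $A$ is minimal. The key observation about covers is the following: if $\pi: F \to N$ is an $\cA$-cover and $F = F_1 \oplus F_2$ is a direct sum decomposition with $F_2 \subseteq \ker \pi$, then $F_2 = 0$. The proof is immediate: the projection $p: F \to F$ onto $F_1$ along $F_2$ satisfies $\pi p = \pi$, so $p$ is an isomorphism by the defining property of a cover, forcing $F_2 = \ker p = 0$. Dually, if $\iota: N \to F$ is an $\cA$-envelope and $F = F_1 \oplus F_2$ with $\iota(N) \subseteq F_1$, then the projection onto $F_1$ fixes $\iota(N)$, hence is an isomorphism by the envelope property, giving $F_2 = 0$.

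Suppose for contradiction that $A = A' \oplus C$ with $C$ a nonzero contractible summand. Choose a contracting homotopy $s: C \to C$; the complementary idempotents $\del s$ and $s\del$ decompose $C_i = V_i \oplus U_i$, where $U_i = \ker(\del_i|_C) = \im(\del_{i+1}|_C)$ and $\del|_{V_i}: V_i \xrightarrow{\cong} U_{i-1}$. In particular $\coker(\del_{i+1}|_C) \cong V_i$. In case~(1), the summand $U_i$ of $A_i = A_i' \oplus V_i \oplus U_i$ lies in $\ker \pi_i = \im \del_{i+1}$, so the cover key lemma gives $U_i = 0$ for every $i$; then $V_i \cong U_{i-1} = 0$ for every $i$, and hence $C = 0$. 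In case~(2), the image $\iota_i(\ker \del_i) = \ker(\del_i|_{A'}) \oplus U_i$ is contained in $A_i' \oplus U_i$, so $V_i$ is a summand of $A_i$ disjoint from $\iota_i(\ker \del_i)$; the envelope key lemma yields $V_i = 0$ for every $i$, whence $U_{i-1} \cong V_i = 0$ and $C = 0$. Either case contradicts $C \neq 0$.

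The principal subtlety is the appeal to the implication ``$A$ has no nonzero contractible direct summand $\Rightarrow$ $A$ is minimal.'' The opposite direction is essentially \cite[Lemma 1.7]{AM02} (as cited elsewhere in the excerpt), but the direction we need, while standard for complexes of modules over a commutative noetherian ring, is what I expect to be the only nonroutine piece of the write-up. I would supply a direct verification that every $\gamma: A \to A$ with $\gamma \sim 1_A$ is an isomorphism, using the identity $1_A - \gamma = \del h + h\del$ together with a degreewise application of the cover/envelope key lemma above; the obstacle is to set up this degreewise argument in a manner compatible with unbounded complexes, for which tracking the interaction between the homotopy $h$ and the idempotent structure on the putative contractible summand is the crux.
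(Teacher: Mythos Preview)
Your argument that $A$ has no nonzero contractible direct summand is correct, but the implication you flag as ``the principal subtlety'' is false in general, so the proof as structured cannot be completed. Take $R=\mathbb{Z}$ and $A=(0\to\mathbb{Z}\xrightarrow{2}\mathbb{Z}\to 0)$. This complex has no nonzero contractible direct summand (the only summands of $\mathbb{Z}$ are $0$ and $\mathbb{Z}$, and the differential is not an isomorphism, so the only candidate $C=A$ is not acyclic), yet multiplication by $-3$ is homotopic to the identity via the homotopy $h=2$ and is not an isomorphism. Thus ``no contractible summand'' is strictly weaker than minimality, and your first two paragraphs, while valid, do not extract enough from the cover/envelope hypothesis to reach the conclusion. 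The same example shows your proposed direct verification cannot proceed \emph{via the summand key lemma alone}: that lemma only kills summands sitting inside $\ker\pi_i$, whereas what is needed is the full cover axiom that any endomorphism of $A_i$ lying over an automorphism of $\coker(\del_{i+1})$ is itself an isomorphism.

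The paper carries out the direct verification, but using this stronger form of the cover property together with an induction. For $\gamma\sim 1_A$ with homotopy $\sigma$, one first treats the right-bounded case: at the bottom degree $n$ the identity $\pi_n(1-\gamma_n)=\pi_n\del_{n+1}\sigma_n+\pi_n\sigma_{n-1}\del_n=0$ gives $\pi_n\gamma_n=\pi_n$, so the cover property forces $\gamma_n$ to be an isomorphism. Assuming $\gamma_j$ is an isomorphism for $j<i$, repeated five-lemma arguments across the short exact sequences linking $\im\del$, $\ker\del$, $\coker\del$, and homology show that the map $\overline{\gamma_i}$ induced on $\coker(\del_{i+1})$ is an isomorphism; then $\pi_i\gamma_i=\overline{\gamma_i}\pi_i$ and the cover property again give that $\gamma_i$ is an isomorphism. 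For unbounded $A$ one hard-truncates at an arbitrary degree $j-1$ and replaces $\gamma_{j-1}$ by $\gamma_{j-1}+\sigma_{j-2}\del_{j-1}$, which is still a chain map on $A_{\geq j-1}$ homotopic to the identity, thereby reducing to the bounded case and concluding that $\gamma_j$ is an isomorphism for every $j$.
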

\begin{proof}
For each $i\in \Z$, set $C_i=\coker(\del_{i+1})$ and assume the canonical surjection $\pi_i:A_i\twoheadrightarrow C_i$ is an $\cA$-cover. We first address the case where $A$ is bounded on the right, so that for some $n\in \Z$, we have $A_i=0$ for $i<n$. Let $\gamma:A\to A$ be a degree zero chain map such that $\gamma\sim 1^A$. Thus there is a homotopy $\sigma$ such that $1^A_i-\gamma_i=\del_{i+1}\sigma_i+\sigma_{i-1}\del_i$ for each $i\in \Z$.  We first claim the following diagram commutes:
\[\xymatrix{
A_n \ar@{->>}[r]^{\pi_n} \ar[d]^{\gamma_n} & C_n\ar[d]^{1^{C}_n}\\
A_n \ar@{->>}[r]^{\pi_n} & C_n
}\]
This follows because $\del_n=0$ (by the assumption that $A_i=0$ for $i<n$) and $\pi_n\del_{n+1}=0$, so
$$\pi_n(1^{A}_n-\gamma_n)=\pi_n\del_{n+1}\sigma_n+\pi_n\sigma_{n-1}\del_n=0 \implies 1^{C}_n\pi_n-\pi_n\gamma_n=0.$$ 
Because $\pi_n$ is an $\cA$-cover, we conclude that $\gamma_n$ is an isomorphism.  

Fix $i>n$ and proceed by induction, assuming $\gamma_{j}$ is an isomorphism for all $j\leq i-1$. As $\gamma$ is a chain map, $\gamma_{i-1}$ induces a map $\ker(\del_{i-1})\to \ker(\del_{i-1})$. Since $\gamma_{i-1}$ and $\gamma_{i-2}$ are isomorphisms, the five lemma yields that $\gamma_{i-1}:\ker(\del_{i-1})\to \ker(\del_{i-1})$ is an isomorphism as well. Moreover, as $\gamma\sim 1^A$, we know that $\gamma_{i-1}$ induces an isomorphism on homology. Consider the short exact sequence $B_{i-1}\hookrightarrow Z_{i-1} \twoheadrightarrow \H_{i-1}(A)$, where $B_{i-1}=\im(\del_i)$ and $Z_{i-1}=\ker(\del_{i-1})$. The diagram below commutes because $\gamma$ is a chain map:
\[\xymatrix{
B_{i-1}\ar@{^(->}[r]\ar[d]^{\gamma_{i-1}} & Z_{i-1} \ar@{->>}[r] \ar[d]^{\gamma_{i-1}}_{\cong}& \H_{i-1}(A) \ar[d]_{\cong}\\
B_{i-1} \ar@{^(->}[r] & Z_{i-1} \ar@{->>}[r] & \H_{i-1}(A)
}\]
The five lemma forces $\gamma_{i-1}:B_{i-1}\to B_{i-1}$ to also be an isomorphism. We also have a short exact sequence $\H_{i}(A)\hookrightarrow C_{i} \twoheadrightarrow A_i/Z_i\cong B_{i-1}$, which shows $\gamma_i$ induces an isomorphism on $C_i$. Therefore we have the following commutative diagram:
\[\xymatrix{
A_i \ar@{->>}[r]^{\pi_i} \ar[d]^{\gamma_i} & C_i\ar[d]^{\overline{\gamma_i}}_{\cong}\\
A_i \ar@{->>}[r]^{\pi_i} & C_i
}\]
Since $\pi_{i}$ is an $\cA$-cover, we conclude that $\gamma_i$ is an isomorphism as well. By induction, this shows $\gamma$ is an isomorphism. Thus, when $A$ is right bounded and we assume (1) holds, $A$ is minimal.

Now, still assuming (1) holds, consider the case where $A$ is no longer assumed to be bounded on the right.  Let $\gamma:A\to A$ be such that $\gamma \sim 1^A$, so that $1^{A}_i-\gamma_i=\del_{i+1}\sigma_i+\sigma_{i-1}\del_i$ for each $i\in \Z$.  Fix $j\in \Z$ and define a map $\widetilde{\gamma}:A_{\geq j-1}\to A_{\geq j-1}$, where $A_{\geq j-1}$ is a hard truncation, as:
$$\widetilde{\gamma}=
\begin{cases} \gamma_i, & i\geq j\\
\gamma_{j-1}+\sigma_{j-2}\del_{j-1}, & i=j-1\\
0, & i<j-1\end{cases}$$
Since $\sigma_{j-2}\del_{j-1}\del_j=0$, the following diagram commutes and hence $\widetilde{\gamma}$ defines a chain map.
\[\xymatrix{
A_{\geq j-1}: \ar[d]^{\widetilde{\gamma}}&\cdots \ar[r]^{\del_{j+1}} & A_j \ar[rr]^{\del_{j}} \ar[d]^{\gamma_j} && A_{j-1} \ar[rr]\ar[d]^{\gamma_{j-1}+\sigma_{j-2}\del_{j-1}} && 0\ar[d]\\
A_{\geq j-1}: &\cdots \ar[r]^{\del_{j+1}} & A_j \ar[rr]^{\del_j} && A_{j-1} \ar[rr] && 0
}\]
Moreover, $\widetilde{\gamma}\sim 1^{A_{\geq j-1}}$ where the homotopy is just given by $\sigma_{\geq j-1}=\begin{cases} \sigma_i, & i\geq j-1\\ 0, & i<j-1\end{cases}$. Since $A_{\geq j-1}$ is bounded on the right, the work above shows that $\widetilde{\gamma}$ is an isomorphism, hence $\gamma_j$ is an isomorphism.  As $j\in \Z$ was arbitrary, this yields that $\gamma$ is an isomorphism and hence $A$ is minimal.

For case (2), the argument is dual, and is left to the reader.  First consider the case where $A$ is bounded on the left and proceed inductively; for an arbitrary $R$-complex $A$, hard truncate on the left and define $\widetilde{\gamma}$ by using the homotopy to modify the leftmost nonzero map as was done above.
\end{proof}

We aim to prove a partial converse to this result, noting that the converse cannot hold in general since there are minimal $R$-complexes not built entirely from covers or entirely from envelopes:

\begin{example}\label{example_min}
Let $R$ be a local ring, $M$ a finitely generated $R$-module with $\pd_RM=1$, and $N$ an $R$-module with $\id_RN=1$.  Let $P=0\to P_1\xrightarrow{\del_1}P_0\to 0$ be the minimal projective resolution of $M$ and $I=0\to I_0\xrightarrow{\del_0}I_{-1}\to 0$ be the minimal injective resolution of $N$. Set $A=P\oplus \Sigma^{-1} I$, where $\Sigma^{-1} I$ is the complex $I$ shifted one degree to the right.

Evidently, $A$ is a minimal complex: Any $\gamma:A\to A$ that is homotopic to $1^A$ restricts to maps on $P$ and on $I$, both homotopic to their respective identity maps. As $P$ and $I$ are both minimal complexes, these maps are isomorphisms, and hence $\gamma$ is an isomorphism.

However, $A$ is not built entirely from covers or entirely from envelopes.  Let $\cA$ be any class of modules, containing 0 and closed under isomorphisms. Since $\del_1:P_1\hookrightarrow P_0$ is an injection, the canonical inclusion $0=\ker(\del_1^A)\hookrightarrow A_1\not=0$ cannot be an $\cA$-envelope. Furthermore, since $\del_0:I_0\twoheadrightarrow I_{-1}$ is a surjection, the canonical map $0\not=A_{-2}\twoheadrightarrow \coker(\del_{-1}^A)=0$ cannot be an $\cA$-cover.
\end{example}

We can say more in the context of cotorsion pairs. Recall that, for any class of $R$-modules $\cA$, one defines the orthogonal classes 
\begin{align*}
{}^\perp \cA&=\{M\in \Mod(R) \mid \Ext_R^1(M,A)=0 \text{ for all } A\in \cA\},\\
\cA^\perp&=\{N\in \Mod(R) \mid \Ext_R^1(A,N)=0 \text{ for all } A\in \cA\}.
\end{align*}
If $\cF$ and $\cC$ are classes of $R$-modules closed under isomorphisms, we say that $(\cF,\cC)$ is a {\em cotorsion pair} if $\cF={}^\perp\cC$ and $\cF^\perp = \cC$.

\begin{prop}\label{converse_min}
Let $(\cF,\cC)$ be a cotorsion pair of $R$-modules. 
\begin{enumerate}
\item If $F$ is a minimal $R$-complex with $F_i\in \cF$ and $\im(\del_i^F)\in \cC$ for all $i\in \Z$, then the canonical surjection $F_i\twoheadrightarrow \coker(\del_{i+1}^F)$ is an $\cF$-precover for all $i\in \Z$ and an $\cF$-cover for $i\geq \sup\{j\mid \H_j(F)\not=0\}$.
\item If $C$ is a minimal $R$-complex with $C^i\in \cC$ and $\im(\del^i_C)\in \cF$ for all $i\in \Z$, then the canonical inclusion $\ker(\del^i_C)\hookrightarrow C^i$ is a $\cC$-preenvelope for all $i\in \Z$ and a $\cC$-envelope for $i\geq \sup\{j\mid \H^j(C)\not=0\}$.
\end{enumerate}
\end{prop}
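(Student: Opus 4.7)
The plan is to prove part (1) directly and deduce part (2) by a formally dual construction. Throughout, write $C_i := \coker(\del_{i+1}^F)$ and $\pi_i : F_i \twoheadrightarrow C_i$ for the canonical surjection.

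For the precover claim in (1), I would apply $\Hom_R(F',-)$ for any $F' \in \cF$ to the short exact sequence $0 \to \im(\del_{i+1}^F) \to F_i \xrightarrow{\pi_i} C_i \to 0$. Since $F' \in \cF$ and $\im(\del_{i+1}^F) \in \cC$ by hypothesis, the cotorsion pair property yields $\Ext_R^1(F',\im(\del_{i+1}^F))=0$, so every morphism $F' \to C_i$ lifts through $\pi_i$. This gives the $\cF$-precover property for all $i \in \Z$ without any restriction on $i$.

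For the cover claim when $i \geq s := \sup\{j \mid \H_j(F)\neq 0\}$, suppose $f : F_i \to F_i$ satisfies $\pi_i f = \pi_i$, and set $g := 1_{F_i} - f$, which factors through $\im(\del_{i+1}^F) = \ker(\pi_i)$. The plan is to build a chain endomorphism $\gamma : F \to F$ with $\gamma_i = f$ that is homotopic to $1^F$; minimality of $F$ then forces $\gamma$, hence $f$, to be an isomorphism. The crucial step is to lift $g$ to a map $\sigma_i : F_i \to F_{i+1}$ satisfying $\del_{i+1}^F\sigma_i = g$, which requires $\Ext_R^1(F_i, \ker(\del_{i+1}^F))=0$. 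This is precisely where the hypothesis $i \geq s$ is used: since $i+1 > s$, we have $\H_{i+1}(F)=0$, so $\ker(\del_{i+1}^F) = \im(\del_{i+2}^F) \in \cC$, and a second application of the cotorsion pair property delivers the desired Ext-vanishing. Given $\sigma_i$, one sets $\gamma_i = f$, $\gamma_{i+1} = 1_{F_{i+1}} - \sigma_i \del_{i+1}^F$, and $\gamma_j = 1_{F_j}$ for all other $j$, with homotopy $\sigma$ equal to $\sigma_i$ in degree $i$ and zero elsewhere; the relations $\del_{i+1}^F\sigma_i = g$ and $\del_i^F\del_{i+1}^F = 0$ make checking that $\gamma$ is a chain map and $\gamma \sim 1^F$ a routine verification.

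Part (2) then follows from a formally dual construction: for the preenvelope claim I would apply $\Hom_R(-,C')$ for $C' \in \cC$ to $0 \to \ker(\del^i_C) \to C^i \to \im(\del^i_C) \to 0$ and invoke $\im(\del^i_C) \in \cF$; for the envelope claim at $i \geq \sup\{j \mid \H^j(C) \neq 0\}$, given $f : C^i \to C^i$ restricting to the identity on $\ker(\del^i_C)$, the map $g := 1_{C^i} - f$ vanishes on $\ker(\del^i_C)$ and factors through $\im(\del^i_C) \subseteq C^{i+1}$. The resulting map extends to $\sigma^{i+1} : C^{i+1} \to C^i$ because $\coker(\del^i_C) \cong \im(\del^{i+1}_C) \in \cF$ once $i+1 > \sup\{j \mid \H^j(C) \neq 0\}$, and the dual chain endomorphism $\gamma$ built as before is homotopic to $1^C$, forcing $f$ to be an isomorphism by minimality. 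The main obstacle throughout is not computational but conceptual: locating the correct Ext-vanishing and its precise range of validity, which in turn pins down the cut-off $i \geq s$ in the statement as the exact threshold above which the kernel (resp.\ cokernel) of the differential becomes an image of the next differential and thus inherits the required $\cC$- (resp.\ $\cF$-) membership.
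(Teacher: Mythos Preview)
Your proof is correct, and the precover/preenvelope portions are identical to the paper's. For the cover claim, however, your construction is genuinely more economical than the paper's. The paper, given $f:F_n\to F_n$ with $\pi_nf=\pi_n$, first extends $f$ to a full chain map $\gamma_{\geq n}:F_{\geq n}\to F_{\geq n}$ lifting $1^{C_n}$ (using the comparison principle for resolutions, since $C_j\cong\im(\del_j^F)\in\cC$ for $j\geq n+1$), then inductively constructs homotopy maps $\sigma_j$ for \emph{all} $j\geq n$, and finally extends by the identity below degree $n$. You instead produce a single lift $\sigma_i$ and modify $\gamma$ only in degrees $i$ and $i+1$; the verification that $\gamma$ is a chain map homotopic to $1^F$ becomes a short direct check rather than an induction. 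Both approaches rest on the same Ext-vanishing (for you, $\Ext^1_R(F_i,\ker\del_{i+1}^F)=0$ once $\H_{i+1}(F)=0$), so the conceptual content is the same, but your argument avoids the infinite inductive construction. The dual argument for part~(2) is likewise sound and parallels the paper's outline.
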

\begin{proof}
Let $F$ be a minimal $R$-complex as in (1) and for $i\in \Z$, set $C_i=\coker(\del_{i+1}^F)$.

Fix $n\in \Z$. To see that the canonical surjection $\pi_n:F_n\twoheadrightarrow C_n$ is an $\cF$-precover, let $G\in \cF$ and consider the short exact sequence $\im(\del_{n+1}^F)\hookrightarrow F_n\twoheadrightarrow C_n$. Since $\Ext_R^1(G,\im(\del_{n+1}^F))=0$, the surjection $\pi_n$ induces a surjection $\Hom_R(G,F_n)\twoheadrightarrow \Hom_R(G,C_n)$. It follows that any map $G\to C_n$ factors through $F_n$, hence $\pi_n:F_n\twoheadrightarrow C_n$ is an $\cF$-precover.
 
Now assume $n\geq \sup\{j\mid \H_j(F)\not=0\}$. Let $\gamma_n:F_n\to F_n$ be such that $\pi_n\gamma_n=\pi_n$. Since, for $i\geq n+1$, each $C_i\cong\im(\del_i^F) \in \cC$, the map $\gamma_n$ extends to a map $\gamma_{\geq n}:F_{\geq n} \to F_{\geq n}$, thought of as a map of left resolutions of $C_n$ which lifts $1^{C_n}$. Moreover, $\gamma_{\geq n}\sim 1^{F_{\geq n}}$ by a standard argument, which we give in this case: Because $C_{n+2}\cong \im(\del_{n+2})\in \cC$, the natural map $\Hom_R(F_n,F_{n+1})\twoheadrightarrow \Hom_R(F_n,C_{n+1})$ is surjective; furthermore, because $\pi_n1^F_n-\pi_n\gamma_n=0$, we have $1^F_n-\gamma_n\in \Hom_R(F_n,C_{n+1})$, and so there is a map $\sigma_n:F_n\to F_{n+1}$ such that $1^F_n-\gamma_n=\del_{n+1}^F\sigma_n$. For $j> n$, we inductively assume that $\sigma_{j-1}$ and $\sigma_{j-2}$ have been constructed so that $1_{j-1}^F-\gamma_{j-1}=\sigma_{j-2}\del_{j-1}^F+\del_j^F\sigma_{j-1}$ (where we set $\sigma_{n-1}=0$), and claim that $(1^F_j-\gamma_j)-\sigma_{j-1}\del_j^F\in \Hom_R(F_j,C_{j+1})$.  This follows because 
\begin{align*}
\del_j^F((1^F_j-\gamma_j)-\sigma_{j-1}\del_j^F)&=\del_j^F1^F_j-\del_j^F\gamma_j-\del_j^F\sigma_{j-1}\del_j^F\\
&=\del_j^F1^F_j-\del_j^F\gamma_j-(\sigma_{j-2}\del_{j-1}^F+\del_j^F\sigma_{j-1})\del_j^F\\
&=\del_j^F1^F_j-\del_j^F\gamma_j-(1_{j-1}^F-\gamma_{j-1})\del_j^F\\
&=-\del_j^F\gamma_j+\gamma_{j-1}\del_j^F\\
&=0,
\end{align*}
since $\gamma$ is a chain map. As before, there exists a map $\sigma_{j}$ such that $(1^F_j-\gamma_j)-\sigma_{j-1}\del_j^F=\del_{j+1}^F\sigma_j$, hence $1^F_j-\gamma_j=\sigma_{j-1}\del_j^F+\del_{j+1}^F\sigma_j$. Thus $\gamma_{\geq n}\sim 1^{F_{\geq n}}$. Extend the map $\gamma_{\geq n}$ to a map $\gamma:F\to F$ by defining $\gamma_i=1^{F}_i$ for $i<n$ and set $\sigma_i=0$ for $i<n$. Inspection shows that $\gamma\sim 1^F$ via the homotopy $\sigma$. Minimality of $F$ implies that $\gamma$, and therefore $\gamma_n$, is an isomorphism. Thus $\pi_n:F_n\twoheadrightarrow C_n$ is an $\cF$-cover.

For an $R$-complex $C$ as in (2), the argument is dual. One first remarks that $C^i/\ker(\del^i_C)\cong \im(\del^i_C)\in \cF$, and so for any $E\in \cC$, we have $\Ext_R^1(C^i/\ker(\del^i_C),E)=0$ and thus maps $\ker(\del^i_C)\to E$ factor through $\ker(\del^i_C)\hookrightarrow C^i$, implying that the natural inclusions are $\cC$-preenvelopes. For $n\geq \sup\{j\mid H^j(C)\not=0\}$, one argues that any map $\gamma^n:C^n\to C^n$ induces a map $C\to C$ that is homotopic to $1_C$; minimality of $C$ yields the desired result.
\end{proof}

Combined with Theorem \ref{cov_env_min}, one consequence of Proposition \ref{converse_min} is that an acyclic complex $F$ of modules from $\cF$ with syzygies from $\cC$ is minimal if and only if $F_i\twoheadrightarrow \coker(\del_{i+1}^F)$ is an $\cF$-cover for each $i\in \Z$; an acyclic complex $C$ of modules from $\cC$ with syzygies from $\cF$ is minimal if and only if $\ker(\del^i_C)\hookrightarrow C^i$ is a $\cC$-envelope for each $i\in \Z$.

Another case of interest in the previous result is when $F$ (or $C$) is a left (or right) resolution of a module. Let $\cP$ be the class of projective modules, $\cI$ be the class of injective modules, and $\cM$ be the class of all $R$-modules, and recall \cite[Example 7.1.3]{EJ00} that $(\cP,\cM)$ and $(\cM,\cI)$ are cotorsion pairs. We immediately obtain that a projective resolution $P$ of a module is minimal if and only if $P_i\twoheadrightarrow \coker(\del_{i+1})$ is a $\cP$-cover for all $i\in \Z$; also, an injective resolution $I$ of a module is minimal if and only if $\ker(\del^i)\hookrightarrow I^i$ is an $\cI$-envelope for all $i\in \Z$.  
\begin{cor}\label{perfect_projective_covers}
Let $R$ be a ring. Then the following are equivalent:
\begin{enumerate}
\item Every $R$-module has a $\cP$-cover, i.e., $R$ is perfect;
\item Every $R$ module has a minimal projective resolution.
\end{enumerate}
\end{cor}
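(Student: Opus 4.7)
The plan is to deduce both implications directly from Theorem \ref{cov_env_min} and Proposition \ref{converse_min} applied to the cotorsion pair $(\cP,\cM)$, where $\cP$ is the class of projective $R$-modules and $\cM$ is the class of all $R$-modules. This is indeed a cotorsion pair: $\cP^\perp=\cM$ because $\Ext_R^1(P,-)=0$ for projective $P$, and conversely ${}^\perp\cM=\cP$ because if $\Ext_R^1(M,N)=0$ for every $R$-module $N$, then $M$ is projective.

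For $(1)\Rightarrow(2)$, given an $R$-module $M$ I would inductively build a covering $\cP$-resolution: choose a $\cP$-cover $\pi_0:P_0\twoheadrightarrow M$ (which exists by (1) and is surjective since $R\in\cP$), then a $\cP$-cover $P_1\twoheadrightarrow \ker(\pi_0)$, and so on, as in \ref{covers_envelopes_resolutions}. Surjectivity at each stage makes the augmented sequence $\cdots\to P_1\to P_0\to M\to 0$ exact, so this is a genuine projective resolution of $M$. By construction, the canonical surjection $P_i\twoheadrightarrow\coker(\del_{i+1})$ is a $\cP$-cover for every $i\geq 0$ (and trivially so for $i<0$, where both modules are zero). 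Theorem \ref{cov_env_min}(1) then immediately yields that this resolution is minimal.

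For $(2)\Rightarrow(1)$, let $M$ be an arbitrary $R$-module and let $P$ be a minimal projective resolution of $M$, which exists by (2). Then $P_i\in\cP$ and $\im(\del_i^P)\in\cM$ trivially for every $i\in\Z$, and $\sup\{j\mid \H_j(P)\neq 0\}\leq 0$ since $P$ resolves $M$. Proposition \ref{converse_min}(1) applied at $n=0$ then shows that the canonical surjection $P_0\twoheadrightarrow\coker(\del_1^P)\cong M$ is a $\cP$-cover. Since $M$ was arbitrary, every $R$-module has a projective cover, so $R$ is perfect.

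There is no substantive obstacle here, as both directions reduce essentially to invocations of the two main results of this section in the specific case of the cotorsion pair $(\cP,\cM)$. The only minor point requiring care is to verify that the covering construction in $(1)\Rightarrow(2)$ actually produces an exact augmented complex, which is ensured by the surjectivity of $\cP$-covers (a consequence of $R\in\cP$).
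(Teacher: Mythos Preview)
Your proof is correct and follows essentially the same approach as the paper: both directions are deduced from Theorem \ref{cov_env_min} and Proposition \ref{converse_min} applied to the cotorsion pair $(\cP,\cM)$. You have simply made explicit the details (exactness of the covering $\cP$-resolution, the index at which Proposition \ref{converse_min} is applied) that the paper leaves to the reader.
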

\begin{proof}
The implication $(1)\implies (2)$ follows from Theorem \ref{cov_env_min}.  For the other implication, since $(\cP,\cM)$ is a cotorsion pair, Proposition \ref{converse_min} then implies the canonical surjections in a minimal projective resolution are all $\cP$-covers, and (1) follows.
\end{proof}
The conditions of Corollary \ref{perfect_projective_covers} are also equivalent to every flat module being projective; additional equivalent conditions are contained in \cite[Theorem 1.2.13]{Xu96}.

\section{Minimal cotorsion flat resolutions and replacements}\label{section_minimal_CF}
As an application of the results in Section \ref{covers_envelopes_minimality}, the goal of this final section is to show the existence of cotorsion flat resolutions and replacements for modules. 

A {\em left (or right) cotorsion flat resolution} of an $R$-module $M$ is a complex $B$ of cotorsion flat $R$-modules with a quasi-isomorphism $B\xrightarrow{\simeq} M$ (or $M\xrightarrow{\simeq}B$), such that $B_{i}=0$ for $i<0$ (or for $i>0$). A {\em cotorsion flat replacement} of an $R$-module $M$ is a complex of cotorsion flat $R$-modules which is isomorphic to $M$ in $\D(R)$. We caution that left/right cotorsion flat resolutions only exist for certain modules, but cotorsion flat replacements exist for every module (see Theorem \ref{CFreplacement}).

\begin{example}\label{example}
Let $(R,\m)$ be a complete local ring.  Finitely generated projective $R$-modules are cotorsion flat; therefore, the minimal projective resolution of any finitely generated $R$-module is a minimal left cotorsion flat resolution.
\end{example}

Let $\Flat$, $\Cot$, and $\PurInj$ be the classes of flat, cotorsion, and pure-injective $R$-modules (see \cite[Definition 5.3.6]{EJ00} for a definition of pure-injective module). Recall our convention from \ref{covers_envelopes_resolutions} that a covering $\Flat$-resolution is built from $\Flat$-covers and an enveloping $\Cot$- (or $\PurInj$-)resolution is built from $\Cot$- (or $\PurInj$-)envelopes.  

\begin{thm}\label{CFreplacement}
Let $R$ be a commutative noetherian ring.
\begin{enumerate}
\item Every cotorsion $R$-module has a minimal left cotorsion flat resolution; the covering $\Flat$-resolution is such a resolution.
\item Every flat $R$-module has a minimal right cotorsion flat resolution; the enveloping $\Cot$- (or $\PurInj$-)resolution is such a resolution.
\item Every $R$-module is isomorphic in $\D(R)$ to a minimal semi-flat complex of cotorsion flat $R$-modules; more precisely, for any $R$-module $M$ there is a diagram of quasi-isomorphisms
$$B\xleftarrow{\simeq}F\xrightarrow{\simeq} M,$$
where $F$ is a minimal complex of flat $R$-modules and $B$ is a semi-flat minimal complex of cotorsion flat $R$-modules.
\end{enumerate}
\end{thm}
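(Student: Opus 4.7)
For parts (1) and (2), the plan is to construct the resolutions from covers or envelopes and invoke Theorem \ref{cov_env_min}. In (1), let $M$ be cotorsion. Bican--El Bashir--Enochs supplies a flat cover $F_0 \twoheadrightarrow M$; Wakamatsu's lemma for the cotorsion pair $(\Flat, \Cot)$ gives $\ker(F_0 \twoheadrightarrow M) \in \Flat^\perp = \Cot$, so iterating produces a covering flat resolution $F \to M$. Because the flat cover of a cotorsion module is itself cotorsion flat (Xu, \cite[Lemma 2.2.2]{Xu96}), every $F_i$ lies in $\Flat \cap \Cot$, and Theorem \ref{cov_env_min}(1) yields minimality. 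In (2), let $M$ be flat. Xu's theorem supplies a cotorsion envelope $M \hookrightarrow C^0$, and the dual Wakamatsu lemma combined with $(\Flat, \Cot)$ being a cotorsion pair forces $\coker(M \hookrightarrow C^0) \in {}^\perp\Cot = \Flat$. Hence $C^0$ is an extension of two flat modules, so it is flat and thus cotorsion flat. Iterating produces an enveloping cotorsion resolution by cotorsion flat modules, minimal by Theorem \ref{cov_env_min}(2). The pure-injective variant follows in the same way, since over a noetherian ring the pure-injective envelope of a flat module coincides with its cotorsion envelope.

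For (3), the plan is to combine (1) and (2) via a Cartan--Eilenberg-style bicomplex. Let $M$ be any $R$-module. Applying Theorem \ref{cov_env_min}(1) with the class $\Flat$ (using that every $R$-module has a flat cover) yields a minimal flat resolution $F \to M$; as $F$ is a bounded-on-the-right complex of flat modules, it is semi-flat by \cite[Example 1.1.F]{AF91}. For each $F_i$, use (2) to fix a minimal right cotorsion flat resolution $F_i \to C_i^{\bullet}$. Iteratively using the lifting property of cotorsion preenvelopes assembles these into a first-quadrant bicomplex $C_{\bullet, \bullet}$ of cotorsion flat modules whose columns are the chosen resolutions $F_i \to C_i^{\bullet}$. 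Let $B$ be the direct-product totalization. Since $R$ is noetherian, both $\Flat$ and $\Cot$ are closed under arbitrary direct products, so each $B^n$ is a cotorsion flat module. The augmentations $F_i \hookrightarrow C_i^0$ assemble into a map $F \to B$, and the Cartan--Eilenberg spectral sequence (each column exact in positive cohomological degree) shows this map is a quasi-isomorphism.

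The main obstacle will be verifying simultaneously that $B$ is semi-flat and minimal. For semi-flatness, since $B$ is not right-bounded, I would present $B$ as the inverse limit of hard truncations of the envelope direction: each truncated totalization is a bounded complex of flat modules, hence semi-flat, and the transition maps are split surjections (projections off extra product factors) and so satisfy Mittag--Leffler; this should transfer semi-flatness to $B$ after verifying that $\invlim$ interchanges with $-\otimes_R E$ on the system at hand, for $E$ acyclic. For minimality, I plan to apply Theorem \ref{minimal_CF_complexes}(2): by Lemma \ref{CF_structure} and Proposition \ref{coloc_and_comp_preserve_minimal}, the test functor $R/\p \otimes_R \Hom_R(R_\p, \Lambda^\p(-))$ commutes with direct products of cotorsion flat modules, so its value on $B$ is the totalization of its values on the $C_{i,j}$. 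The vertical differentials (from each $C_i^{\bullet}$) vanish by minimality of those columns together with Theorem \ref{minimal_CF_complexes}; the horizontal differentials (from $\partial_F$) should vanish because the minimality of the flat resolution $F$, combined with the Enochs--Xu structural description of cotorsion envelopes of flat modules, forces $\partial_F$ to land in the ``radical'' of the cotorsion envelopes. Pinning down this last translation between the flat-cover minimality of $F$ and the Theorem \ref{minimal_CF_complexes}(2) criterion on $B$ is where the bulk of the technical work will go.
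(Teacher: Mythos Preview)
Your arguments for (1) and (2) are correct and essentially identical to the paper's.

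For (3), your Cartan--Eilenberg bicomplex construction is a genuine detour, and the place you yourself flag as ``the bulk of the technical work'' is a real gap. The horizontal maps $C_i^\bullet \to C_{i-1}^\bullet$ are only lifts of $\partial_i^F$ through cotorsion preenvelopes; they are not canonical, and nothing in the minimality of $F$ (which is a statement about $F_i \twoheadrightarrow \coker(\partial_{i+1}^F)$ being a flat cover) forces any particular lift to be killed by the functor $R/\p \otimes_R \Hom_R(R_\p,\Lambda^\p(-))$. You would need either to exhibit a specific choice of lifts with this property or to prove it holds for all choices, and you have done neither. Your semi-flatness argument is also delicate: commuting $\invlim$ past $-\otimes_R E$ is exactly the kind of step that can fail without additional finiteness hypotheses.

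The paper avoids all of this with a much simpler observation you already made in part (1) but did not exploit here: once you take the covering $\Flat$-resolution $F \to M$, the kernel $\ker(F_0 \to M)$ is cotorsion (Wakamatsu), so the modules $F_i$ for $i \geq 1$ are flat covers of cotorsion modules and hence \emph{already cotorsion flat}. Thus only $F_0$ needs to be resolved to the right. The paper simply splices the enveloping $\Cot$-resolution $F_0 \hookrightarrow C$ onto $F$ at degree~$0$, obtaining
\[
B:\quad \cdots \to F_2 \to F_1 \to C^0 \to C^1 \to \cdots.
\]
Minimality then reduces, via criterion (3) of Theorem~\ref{minimal_CF_complexes}, to the separate minimality of $F$ and $C$ (any forbidden two-term summand of $B$ would yield one in $F$ or in $C$), and semi-flatness follows from the short exact sequence $0 \to F \to B \to B/F \to 0$, where $B/F$ is an acyclic complex of flat modules with flat syzygies. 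No bicomplex, no inverse limits, no analysis of horizontal differentials.
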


\begin{proof}
$(1)$: Let $L$ be a cotorsion $R$-module and $F\xrightarrow{\simeq} L$ its covering $\Flat$-resolution; such a resolution exists and is a left resolution (that is, the augmented sequence $\cdots \to F_1\to F_0\to L\to 0$ is exact) by \ref{covers_envelopes_resolutions}. As the $\Flat$-cover of a cotorsion module is cotorsion flat \cite[Corollary]{Eno84} and the kernel of an $\Flat$-cover is cotorsion \cite[Lemma 2.2]{Eno84}, we note that $F$ is a left cotorsion flat resolution; it is minimal by Theorem \ref{cov_env_min}.

$(2)$: Let $N$ be a flat $R$-module and $N\xrightarrow{\simeq} C$ its enveloping $\Cot$-resolution; such a resolution exists and is a right resolution (i.e., the augmented resolution is exact) by \ref{covers_envelopes_resolutions}. The $\Cot$-envelope of a flat module is cotorsion flat and its cokernel is flat \cite[Theorem 3.4.2]{Xu96}; see also \cite{GJ81} and \cite[Lemma 1.1 and discussion following]{Eno87}. Thus $C$ is a right cotorsion flat resolution, which is minimal by Theorem \ref{cov_env_min}. As the $\Cot$-envelope and the $\PurInj$-envelope of a flat module are isomorphic \cite[Remark 3.4.9]{Xu96}, we see that $C$ is isomorphic to the enveloping $\PurInj$-resolution.

$(3)$: Let $F\xrightarrow{\simeq}M$ be the covering $\Flat$-resolution of $M$ and let $F_0\xrightarrow{\simeq}C$ be the enveloping $\Cot$-resolution of $F_0$.  Stitch these resolutions together as follows:
$$B_i=\begin{cases}F_i, & i>0;\\ C^{-i}, & i\leq 0;\end{cases}$$
with differential
$$\del_i^B=\begin{cases}\del_i^F, & i\geq 2;\\ \iota\circ\del_1^F, & i=1;\\ \del_C^{-i}, & i\leq 0;\end{cases}$$
where $\iota:F_0\hookrightarrow{C^0}$ is the augmentation map. As $F_0$ is flat, $C^{-i}$ is cotorsion flat for $i\leq 0$ by (2); since $\ker(F_0\to M)$ is cotorsion by \cite[Lemma 2.2]{Eno84}, it follows that $F_i$ is cotorsion flat for $i\geq 1$ by (1). Thus $B$ is a complex of cotorsion flat modules such that $B\xleftarrow{\simeq}F\xrightarrow{\simeq} M$ is a diagram of quasi-isomorphisms.

To verify that $B$ is minimal, Theorem \ref{minimal_CF_complexes} tells us it is enough to ensure that $B$ has no subcomplex of the form $\cdots \to 0 \to \widehat{R_\p}^\p\xrightarrow{\cong} \widehat{R_\p}^\p\to 0\to \cdots$ which is degreewise a summand of $B$. If such a forbidden subcomplex of $B$ were to exist, it would follow that one would have to exist as a subcomplex of either $F$ or $C$. This is clear in homological degrees at least 1 or at most 0, and for such a two term subcomplex of $B$ concentrated in homological degrees 1 and 0, one uses the injection $\iota$ to show that it must also induce such a subcomplex of $F$.  However, the complexes $F$ and $C$ are both minimal by Theorem \ref{cov_env_min}, and so no such forbidden subcomplex can exist in $B$. Thus $B$ is a minimal complex.

Finally, we show $B$ is semi-flat: There is a short exact sequence of $R$-complexes $0\to F\to B\to B/F\to 0$. The $R$-complex $B/F$ is isomorphic to $0\to C^0/F_0\to C^1\to C^2\to \cdots$, which is semi-flat because it is an acyclic complex of flat modules having flat syzygies by \cite[Lemma 2.1.2]{Xu96}; the complex $F$ is semi-flat because it is a bounded on the right complex of flat $R$-modules. Therefore, the short exact sequence implies that $B$ is also semi-flat.
\end{proof}

\noindent
{\bf Acknowledgements:} Much of the present work comes from a portion of the author's dissertation at the University of Nebraska-Lincoln.  The author is greatly indebted to his advisor, Mark Walker, whose advice and support has been invaluable. Many thanks are also owed to Lars Winther Christensen, Douglas Dailey, Thomas Marley, and the anonymous referee for helpful suggestions.

\bibliographystyle{plain}

\end{document}